\providecommand{\U}[1]{\protect\rule{.1in}{.1in}}
\newtheorem{theorem}{Theorem}
\theoremstyle{plain}
\newtheorem{corollary}{Corollary}
\newtheorem{definition}{Definition}
\newtheorem{example}{Example}
\newtheorem{lemma}{Lemma}
\newtheorem{proposition}{Proposition}
\newtheorem{remark}{Remark}
\numberwithin{equation}{section}
\newcommand{\ann}{\operatorname{ann}}
\newcommand{\Z}{{\mathbb Z}}
\begin{document}
\title[{\normalsize On divisor topology of modules over integral domains}]{{\normalsize On divisor topology of modules over domains}}
\author{\"{U}nsal Tekir}
\address{Department of Mathematics, Marmara University, Istanbul, Turkey.}
\email{utekir@marmara.edu.tr}
\author{U\u{g}ur Yi\u{g}it }
\address{Department of Mathematics, Istanbul Medeniyet University, 34700,
\"{U}sk\"{u}dar, \.{I}stanbul, Turkey}
\email{ugur.yigit@medeniyet.edu.tr}
\author{Mesut Bu\u{g}day}
\address{Department of Mathematics, Marmara University, Istanbul, Turkey.}
\email{mesut.bugday@marmara.edu.tr}
\author{Suat Ko\c{c}}
\address{Department of Mathematics, Marmara University, Istanbul, Turkey.}
\email{suat.koc@marmara.edu.tr}
\subjclass[2000]{13A15, 54H10, 16D60}
\keywords{Divisor topology, simple module, pseudo-simple module, uniserial module.}

\begin{abstract}
Let $M\ $be a module over a domain $R$ and $M^{\#}=\{0\neq m\in M:Rm\neq M\}$
be the set of all nonzero nongenerators of $M.\ $Consider following
equivalence relation $\sim$ on $M^{\#}$ as follows: for every $m,n\in
M^{\#},\ m\sim n$ if and only if $Rm=Rn.\ $Let $EC(M^{\#})$ be the set of all
equivalence classes of $M^{\#}$ with respect to $\sim$.\ In this paper, we
construct a topology on $EC(M^{\#})$ which is called divisor topology of
$M\ $and denoted by $D(M).$\ Actually, $D(M)$ is extension of the divisor
topology $D(R)$ over domains in the sense of Yi\u{g}it and Ko\c{c} to modules. We
investigate separation axioms $T_{i}$ for every $0\leq i\leq5,$ first and
second countability, connectivity, compactness, nested property, and Noetherian
property on $D(M).\ $Also, we characterize some important classes of modules
such as uniserial modules, simple modules, vector spaces, and finitely
cogenerated modules in terms of $D(M).\ $Furthermore, we prove that $D(M)$ is
a Baire space for factorial modules. Finally, we introduce and study pseudo
simple modules which is a new generalization of simple modules, and use them to
determine when $D(M)$ is a discrete space.

\end{abstract}
\maketitle

\section{Introduction}

Studying new topologies on algebraic structures such as Zariski topology plays
a central role in finding possible connections between topological properties
and algebraic structures. For many years, many authors have created various
topologies on algebraic structures. The reason why Zariski topology is so
important is not only that it combines topology and algebra, but also that it
has profound roots in algebraic geometry. For more information about Zariski
topology and its generalizations, the reader may consult \cite{Callialp},
\cite{Ceken}, \cite{Ceken2}, and \cite{Yildiz}. In a recent study, Yi\u{g}it
and Ko\c{c} introduced divisor topology $D(R)\ $on an integral domain $R$ and
used it to give a topological proof of the infinitude of primes in an integral
domain (See, \cite[Theorem 8]{YiKo}). In this work, we introduce a
generalization of the divisor topology $D(R)$ to modules. Throughout the
paper, we \ focus only on integral domains and nonzero unital modules. Let $R$
will always denote such a domain and $M\ $will denote such an $R$-module. The
set of nonzero nongenerators of $M\ $is denoted by $M^{\#}=\{0\neq m\in
M:Rm\neq M\}.\ $We define a relation $\sim$ on $M^{\#}$ as follows: for every
$m,n\in M,$ $m\sim n$ if and only if $Rm=Rn$,$\ $or equivalently, $m\mid n$
and $n\mid m$ in $M$. Truly, $\sim$ is an equivalence relation on $M^{\#},$
and we denote the set of all equivalence classes of $M^{\#}$ with respect to
$\sim$ by $\operatorname{EC}(M^{\#})=\{[m]:m\in M^{\#}\},$ where $[m]$ denotes
the equivalence class of $m$ with respect to $\sim$. Consider the set
$U_{m}=\{[n]\in\operatorname{EC}(M^{\#}):n\mid m\}$ for each $m\in M^{\#}$,
and we prove that the family $\{U_{m}\}_{m\in M^{\#}}$ forms a basis for a
topology on $M^{\#}$ which is called the divisor topology on
$\operatorname{EC}(M^{\#})$. In the case of $M=R$ where $R$ is an integral domain
and $M={\mathbb{Z}}=R$, the divisor topology $D(M)$ corresponds to the divisor
topology on an integral domain $R$ and the divisor topology on integers
respectively, see \cite{Steen} and \cite{YiKo}. It is clear from the
construction of $D(M)$ that $D(M)$ is an extension of the divisor topology
over integral domains. In this work, we investigate not only possible
relations between topological properties of $D(M)$ and algebraic structures of
a module $M$ but also characterize modules arising from topological properties
of $D(M)$. Recall from \cite{FacSal} that an $R$-module $M$ is called a
\textit{uniserial module} if every two submodules of $M$ can be compared with
respect to the inclusion. Also, a topological space is called a \textit{nested
space} if open sets of this topology are linearly ordered. Among the other
results in Section 2, we prove in Lemma \ref{divide} that an $R$-module $M$ is
a uniserial module if and only if for any two nonzero elements $m,n\in M$, we
have either $m\mid n$ or $n\mid m$. We show afterward that $D(M)$ is a nested
space if and only if $M$ is a uniserial module (See, Theorem \ref{tnested}).
We prove that $D(M)\ $is always an Alexandrov space, and also it is a Baire space for
factorial modules (See, Proposition \ref{pAlex} and Theorem \ref{tBaire}).
Furthermore, we characterize simple modules and vector spaces by means of
Noetherian property of $D(M)$ (See, Theorem \ref{tNoetherian}).

In \cite[Proposition 2 and Proposition 5]{YiKo}, \ the authors showed that the
divisor topology $D(R)\ $is always a $T_{0}$-space but never a $T_{1}$ and Hausdorff space. Contrary to expectations, $D(M)$ may be a $T_{1}$-space (See,
Example \ref{ex1} and Example \ref{ex2}). In Section 3, we are closely
interested in the following question: When $D(M)\ $is a $T_{1}$-space? We give
an answer to the aforementioned question by means of pseudo simple modules in
the language of algebra (See, Theorem \ref{T_1-p.simple}). An $R$-module
$M\ $is said to be a \textit{pseudo simple module} if $Rm$ is simple for every
$m\in M^{\#}$. One can easily see that all simple modules are trivially pseudo
simple but the converse is not true in general. For instance, $%
%TCIMACRO{\U{2124} }%
%BeginExpansion
\mathbb{Z}
%EndExpansion
$-module $%
%TCIMACRO{\U{2124} }%
%BeginExpansion
\mathbb{Z}
%EndExpansion
_{6}$ is a pseudo simple module since $%
%TCIMACRO{\U{2124} }%
%BeginExpansion
\mathbb{Z}
%EndExpansion
\overline{2}=%
%TCIMACRO{\U{2124} }%
%BeginExpansion
\mathbb{Z}
%EndExpansion
\overline{4}$ and $%
%TCIMACRO{\U{2124} }%
%BeginExpansion
\mathbb{Z}
%EndExpansion
\overline{3}$ are simple submodules while it is not a simple module. We
investigate the stability of pseudo simple modules under homomorphism, in
factor modules, in direct products and direct sums, in quotient modules, in the
trivial extension $R\ltimes M$ of an $R$-module $M\ $(See, Theorem \ref{homo},
Corollary \ref{cfac}, Theorem \ref{tdir}, Theorem \ref{tdir2}, Theorem
\ref{tquotient} and Theorem \ref{ttri}). Furthermore, we give a complete
classification for finitely generated pseudo simple ${\mathbb{Z}}$-modules
(See, Theorem \ref{fgPS}).

Section 4 is dedicated to the study of compactness, countability axioms, and
connectivity of $D(M)$. We note that the authors in \cite[Proposition
11]{YiKo} showed that $D(R)\ $can not be compact. Unlike $D(R),\ $it is possible that
$D(M)$ is a compact space (See, Example \ref{excom1}, Example \ref{excom2}
and Example \ref{excom3}). We prove that $D(M)\ $is a compact space if and
only if $M\ $has only finitely many simple submodules (See, Theorem
\ref{tcom}). Thanks to this result, we show that if $D(M)$ is a compact space,
then $M$ is a finitely cogenerated module (See, Theorem \ref{tfinitelycog}).
Also, we examine the first and second countability axioms for $D(M)\ $(See,
Proposition \ref{countable}). Moreover, we study the connectivity of the
divisor topology $D(M)\ $(See, Proposition \ref{ultraconnected} and Corollary
\ref{cconnected}).

Consequently, in Section 5, we study the further topological properties of
$D(M)$ such as Hausdorff, $T_{3}$-axiom, $T_{5}$-axiom, discrete and
(completely) normal properties (See, Theorem \ref{Hausdorff*condition},
Theorem \ref{tdiscrete}, Theorem \ref{main}, Proposition \ref{T5}, Corollary
\ref{ccompletely} and Remark \ref{rcompletely}).

\section{Fundamental Properties of Divisor Topology $D(M)$}

Let $M\ $be an $R$-module and $m,n\in M$. We say that an element $m^{\prime
}\in M$ is the greatest common divisor of $m$ and $n$, denoted by $m^{\prime
}=\gcd(m,n),$ if $m^{\prime}\mid m$ and $m^{\prime}\mid n$ with the property
that for any $m^{\prime\prime}\in M$ dividing both $n$ and $m$ implies
$m^{\prime\prime}\mid m^{\prime}$. In a similar fashion, one can define
$\operatorname{lcm}(m,n)$ for any $m,n\in M$. In the following, our first
result indicates that the family $\{U_{m}\}_{m\in M^{\#}}$ is a basis for a
topology on $\operatorname{EC}(M^{\#})$ which is called the divisor topology
of $M$, denoted by $D(M)$. We omit the proof since the proof is analogous to \cite[Proposition
$1$]{YiKo}.

\begin{proposition}
\label{Prop1} Let $M$ be an $R$-module. Then the following statements hold.

(i) $[m]\in U_{m}$ for any $m\in M^{\#}$.

(ii)$\ m\mid n$ if and only if $U_{m}\subseteq U_{n}$ for all $n,m\in M^{\#}$.

(iii) $\bigcup_{m\in M^{\#}}U_{m}=\operatorname{EC}(M^{\#})$

(iv)\ If $[x]\in U_{m}\cap U_{n}$ for some $m,n\in M^{\#},$ then $[x]\in
U_{x}\subseteq U_{m}\cap U_{n}$.

(v) If $\gcd(m,n)\in M^{\#}$ for some $n,m\in M^{\#},$ then $U_{m}\cap
U_{n}=U_{\operatorname{gcd}(n,m)}$.

(vi) Let $U_{m}\subseteq U_{n}$ and $U_{m^{\prime}}\subseteq U_{n}$ for some
$m,m^{\prime},n\in M^{\#}$.$\ $If $\operatorname{lcm}(m,m^{\prime})$ exists,
then $U_{\operatorname{lcm}(m,m^{\prime})}\subseteq U_{n}$.
\end{proposition}

\begin{remark}
We remark that if $M$ is chosen to be a simple module, then $\operatorname{EC}%
(M^{\#})=\emptyset$ and so $D(M)$ is an empty space. Therefore, we will assume
that $M$ is not a simple module unless otherwise specified.
\end{remark}

\begin{example}
Consider the ${\mathbb{Z}}$-module $M={\mathbb{Z}}$, it is clear that
${\mathbb{Z}}^{\#}=\{n\in{\mathbb{Z}}:n\neq0,\pm1\}$ and $U_{m}=\{[n]:m\text{
is integer multiple of $n$}\}$. In this case, $D(M)$ is actually the divisor
topology on the integers in the sense of Steen \cite{Steen}.
\end{example}

\begin{lemma}
\label{smallU_m} Let $M$ be an $R$-module and $m\in M^{\#}$. Then $U_{m}$ is
the smallest open set containing $[m]$.
\end{lemma}

\begin{proof}
The lemma holds in view of Proposition \ref{Prop1}.
\end{proof}

Given a topology, it is sensible to investigate that which separation axioms satisfies. A topological space $X$ is called a $T_{0}$\textit{-space} if, for
every two distinct points, there is an open set that contains exactly one of
them, not the other. $X$ is called a $T_{1}$\textit{-space} if, for every two
distinct points, there are open sets for each containing exactly one of them.
Also, $X$ is said to be a \textit{Hausdorff space} (or $T_{2}$\textit{-space})
if for every two distinct points there exist two disjoint open sets containing
them \cite{Munkres}. The authors in \cite[Proposition 4 and Proposition
5]{YiKo} showed that $D(R)\ $is always a $T_{0}$-space but not a $T_{1}$-space
and Hausdorff. One of the differences between $D(R)$ and $D(M)$ is that $D(M)$
may, in some cases, be a $T_{1}$ space or Hausdorff even discrete space. More
precisely, we give two examples; the first example shows that $D(M)$ is a
$T_{1}$-space and Hausdorff (indeed, $D(M)\ $is discrete), and the latter
example shows that it is not a $T_{1}$-space.

\begin{proposition}
\label{pT0}$D(M)$ is a $T_{0}$-space.
\end{proposition}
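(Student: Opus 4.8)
The plan is to show that for any two distinct equivalence classes, some basic open set $U_m$ separates them. Let $[m], [n] \in \operatorname{EC}(M^{\#})$ with $[m] \neq [n]$. By definition of the equivalence relation, $[m] = [n]$ precisely when $Rm = Rn$, which by Proposition \ref{Prop1}(ii) is equivalent to $U_m = U_n$. Hence $[m] \neq [n]$ forces $U_m \neq U_n$, so at least one of the inclusions $U_m \subseteq U_n$ or $U_n \subseteq U_m$ must fail.

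First I would invoke Lemma \ref{smallU_m}, which tells us that $U_m$ is the smallest open set containing $[m]$ and $U_n$ is the smallest open set containing $[n]$. Without loss of generality suppose $U_m \not\subseteq U_n$. Then $U_m$ is an open set containing $[m]$; I claim it does not contain $[n]$. Indeed, if $[n] \in U_m$, then since $U_n$ is the smallest open set containing $[n]$ we would get $U_n \subseteq U_m$; combined with the failure of $U_m \subseteq U_n$ this is consistent, but it does not yet give a contradiction. So the cleaner route is to argue directly from the two cases.

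The decisive step is to translate the inequality $U_m \neq U_n$ back through Proposition \ref{Prop1}(ii): $U_m \neq U_n$ means that not both $m \mid n$ and $n \mid m$ hold, so at least one divisibility fails. Say $n \nmid m$; equivalently $U_n \not\subseteq U_m$, so $[n] \notin U_m$ would follow if $[n] \in U_m$ forced $U_n \subseteq U_m$—which is exactly Lemma \ref{smallU_m}. Thus $U_m$ is an open set containing $[m]$ but not $[n]$, which is precisely the $T_0$ separation condition. Symmetrically, if instead $m \nmid n$, then $U_n$ separates the two points with the roles reversed.

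I do not anticipate any serious obstacle here; the argument is essentially a direct unwinding of the definitions together with the minimal-open-set property. The only point requiring care is the logical bookkeeping that $[m] \neq [n]$ is genuinely equivalent to the failure of at least one of the two divisibilities, and that Lemma \ref{smallU_m} converts membership $[n] \in U_m$ into the inclusion $U_n \subseteq U_m$. Once these two facts are laid out, the $T_0$ property follows immediately by selecting whichever of $U_m, U_n$ fails to contain the other point.
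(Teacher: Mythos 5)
Your argument is correct and is essentially the paper's own proof: both reduce $[m]\neq[n]$ to the failure of one divisibility and then exhibit the corresponding basic open set $U_m$ (or $U_n$) as containing one class but not the other. The paper reads $[m]\notin U_n$ directly off the definition of $U_n$ rather than routing through Lemma \ref{smallU_m}, but this is only a cosmetic difference; you could also trim the abandoned detour in your second paragraph.
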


\begin{proof}
Choose two distinct points $[m]$ and $[n]$ in $\operatorname{EC}(M^{\#}),$
then $m\nmid n$ or $n\nmid m$. One may assume that $m\nmid n$, leading to
$[m]\not \in U_{n}$ and $[n]\in U_{n}$.
\end{proof}

\begin{example}
\label{ex1}Consider ${\mathbb{Z}}$-module $M={\mathbb{Z}}_{6}.$ Then
$\operatorname{EC}(M^{\#})=\{[\overline{2}],[\overline{3}]\}$ with the basis
$U_{\overline{2}}=\{[\overline{2}]\}$ and $U_{\overline{3}}=\{[\overline
{3}]\}$. In view of Proposition \ref{Prop1}, $D(M)$ is a discrete topology.
\end{example}

\begin{example}
\label{ex2}Consider ${\mathbb{Z}}$-module $M={\mathbb{Z}}_{12}.\ $Then
$\operatorname{EC}(M^{\#})=\{[\overline{2}]=[\overline{10}],[\overline
{3}]=[\overline{9}],[\overline{4}]=[\overline{8}],[\overline{6}]\}$ with the
basis $U_{\overline{2}}=\{[\overline{2}]\}$, $U_{\overline{3}}=\{[\overline
{3}]\}$, $U_{\overline{4}}=\{[\overline{2}],[\overline{4}]\}$ and
$U_{\overline{6}}=\{[\overline{2}],[\overline{3}],[\overline{6}]\}$. One can
see that $D(M)$ is not a $T_{1}$ space since $[\overline{2}]$ and
$[\overline{4}]$ can not be separated by two open sets.
\end{example}

Let $M$ be an $R$-module and $S$ be a subset of $M$. We say that an element
$m\in S$ an \textit{irreducible on }$S$ if whenever $m_{1}\mid m$ for some
$m_{1}\in S$ then $m\mid m_{1}$. It is easy to see that if we take
$S=M,\ $then irreducibles on $S$ are exactly irreducible elements of $M\ $in
the sense \cite{Lu}. For instance, consider ${\mathbb{Z}}$-module
${\mathbb{Z}}_{6}$ then the irreducibles of ${\mathbb{Z}}_{6}$ are
$\{\overline{1},\overline{5}\}$ and irreducibles on $S={\mathbb{Z}}_{6}^{\#}$
are $\{\overline{2},\overline{3},\overline{4}\}$.\ One can easily show that
irreducibles of a noncyclic $R$-module $M$ and irreducibles on $M^{\#}$
coincide. A torsion free $R$-module $M\ $is said to be a \textit{factorial
module} if $M\ $satisfies following two conditions: $(i)\ $every nonzero $m\in
M$ has irreducible factorization, that is,\ $m=q_{1}q_{2}\cdots q_{n}%
m^{\prime}$ for some irreducibles $q_{1},q_{2},\ldots,q_{n}\in R$ and some
irreducible $m^{\prime}\in M\ $and $(ii)$\ if $m=q_{1}q_{2}\cdots
q_{n}m^{\prime}=p_{1}p_{2}\cdots p_{t}m^{\prime\prime}$ has two irreducible
factorizations, then $n=t,\ q_{i}\sim p_{j}$ and $m^{\prime}\sim
m^{\prime\prime}$, that is, every irreducible factorization is unique up to
order and associated elements \cite{Lu}. A nonzero element $m\in M$ is said to
be a \textit{primitive} if $m\ |\ am^{\prime}$ for some $0\neq a\in R$ and
$m^{\prime}\in M,\ $then $m\ |\ m^{\prime}.\ $Every primitive element of a
torsion free module is irreducible, and the converse holds in factorial
modules (See, \cite[Theorem 2.1]{Lu}).

Recall that an element $x$ in the topological space $X$ is called an
\textit{isolated point} if the singleton $\{x\}$ is an open set in
$X\ $\cite{Munkres}. Now, we are ready to determine all isolated points of
$D(M)$ in terms of irreducibles on $M^{\#}.$

\begin{proposition}
\label{isolated} Let $M$ be an $R$-module (not necessarily a torsion free
module). Then $[m]\in\operatorname{EC}(M^{\#})$ is an isolated point if and
only if $m$ is an irreducible element on $M^{\#}$. In particular, if $M\ $is a
noncyclic module, then $[m]\in\operatorname{EC}(M^{\#})$ is an isolated point
if and only if $m$ is an irreducible element of $M.$ Furthermore, if $M\ $is a
factorial module, then $[m]\in\operatorname{EC}(M^{\#})$ is an isolated point
if and only if $m$ is a primitive element of $M.$
\end{proposition}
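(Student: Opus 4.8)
The plan is to push everything through Lemma~\ref{smallU_m}, which identifies $U_m$ as the smallest open set containing $[m]$; the whole statement then reduces to recognizing when $U_m$ degenerates to the singleton $\{[m]\}$.

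First I would prove the main equivalence (the ``general $M$'' assertion). Since $U_m$ is the smallest open neighbourhood of $[m]$ by Lemma~\ref{smallU_m}, the singleton $\{[m]\}$ is open if and only if $U_m=\{[m]\}$: if $\{[m]\}$ is open, minimality of $U_m$ gives $U_m\subseteq\{[m]\}$, and the reverse inclusion is just $[m]\in U_m$ from Proposition~\ref{Prop1}(i); conversely $U_m=\{[m]\}$ exhibits $\{[m]\}$ as a basic open set. It then remains to unwind $U_m=\{[m]\}$. By definition $U_m=\{[n]\in\operatorname{EC}(M^{\#}):n\mid m\}$, so $U_m=\{[m]\}$ says precisely that every $n\in M^{\#}$ with $n\mid m$ satisfies $[n]=[m]$, i.e.\ $Rn=Rm$, i.e.\ $m\mid n$. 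This is verbatim the definition of $m$ being irreducible on $M^{\#}$, which finishes the first claim.

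Next, the noncyclic case is immediate: I would invoke the fact recorded just before the proposition that, for a noncyclic module $M$, the irreducibles on $M^{\#}$ coincide with the irreducible elements of $M$ (this is exactly the regime $M^{\#}=M\setminus\{0\}$, in which no generator is available to separate the two notions). Combining this identification with the equivalence just proved yields that $[m]$ is isolated if and only if $m$ is an irreducible element of $M$. Finally, for the factorial case I would use that a factorial module is torsion free and appeal to \cite[Theorem~2.1]{Lu}, according to which in a (torsion free) factorial module an element is primitive if and only if it is irreducible. Feeding this equivalence into the noncyclic characterization gives that $[m]$ is isolated if and only if $m$ is primitive.

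The only genuinely delicate point --- and the reason the statement is phrased in three nested cases rather than one --- is the bookkeeping around generators. If $M$ is cyclic then every generator divides each $m\in M^{\#}$ while $m$ divides no generator, so no element of $M^{\#}$ is irreducible on all of $M$; hence ``irreducible on $M^{\#}$'' and ``irreducible element of $M$'' genuinely differ there, and the passage between them must quote the noncyclic coincidence rather than be reproved. I would therefore be careful to apply the coincidence fact (and, in the last step, the torsion free hypothesis needed for \cite[Theorem~2.1]{Lu}) exactly where the hypotheses license it, and not attempt to collapse the three cases into a single argument.
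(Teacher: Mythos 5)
Your proof is correct and follows essentially the same route as the paper's: both use Lemma~\ref{smallU_m} to reduce isolation of $[m]$ to the identity $U_m=\{[m]\}$, unwind this to the definition of irreducibility on $M^{\#}$, and then dispatch the noncyclic and factorial refinements by quoting the coincidence of irreducibles on $M^{\#}$ with irreducibles of $M$ and \cite[Theorem~2.1]{Lu}. Your closing remarks on the cyclic case are just a more explicit account of bookkeeping the paper leaves implicit.
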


\begin{proof}
Suppose $[m]$ is an isolated point in $D(M),$ then there exists an open set
$O$ such that $\{[m]\}=O=\bigcup_{x\in\Lambda}U_{x}$ for some indexed set
$\Lambda\subseteq M^{\#}$. Since $U_{m}$ is the smallest open set containing
$[m]$, one must have $\{[m]\}=U_{m}$. Let $m^{\prime}\in M^{\#}$ be a divisor
of $m$ then $[m^{\prime}]\in U_{m}=\{[m]\}$ which implies that
$m\ |\ m^{\prime}.\ $Hence $m$ is irreducible on $M^{\#}$. Conversely, assume
that $m\ $is irreducible on $M^{\#}.\ $Let $[n]\in U_{m}$ then $n\mid m$ but
$m$ is an irreducible, so $m\ |\ n$ and hence $Rm=Rn$ if and only if
$[m]=[n]$. The rest follows from \cite[Theorem 2.1]{Lu}.
\end{proof}

Recall that a topological space $(X,\tau)$ is called a \textit{nested space} if $\tau$ is
linearly ordered by inclusion, or equivalently, every open set $O_{1},O_{2}$
in $X$ are comparable i.e. $O_{1}\subseteq O_{2}$ or $O_{2}\subseteq O_{1}%
\ $\cite{Richmand}. In \cite[Lemma $1$]{YiKo} the authors showed that a
topological space $(X,\tau)$ with a basis $\mathcal{B}$ is a nested space if
and only if any two elements of basis $\mathcal{B}$ is comparable. Now, we
will determine the conditions under which $D(M)$ is a nested topology. Before
this, we need the following result.

\begin{lemma}
\label{divide} Let $M$ be an $R$-module. Then, $M$ is an uniserial module if
and only if $m\mid n$ or $n\mid m$ for every $m,n\in M^{\#}$.
\end{lemma}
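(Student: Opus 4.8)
The plan is to translate divisibility into inclusions of cyclic submodules and then play this off against the definition of uniseriality. Recall that in the notation of the paper $a\mid b$ means precisely $b\in Ra$, equivalently $Rb\subseteq Ra$; this is the reading forced by the basic open sets $U_m$ together with the description $U_m=\{[n]:m\text{ is an integer multiple of }n\}$ in the $\mathbb{Z}$-example. With this dictionary in hand the forward implication is immediate: if $M$ is uniserial and $m,n\in M^{\#}$, then the cyclic submodules $Rm$ and $Rn$ are comparable, so either $Rn\subseteq Rm$, which says $m\mid n$, or $Rm\subseteq Rn$, which says $n\mid m$.

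For the converse I would argue by contradiction. Assume that any two elements of $M^{\#}$ are comparable under $\mid$, and suppose $M$ is not uniserial, so there exist submodules $N_1,N_2$ with $N_1\not\subseteq N_2$ and $N_2\not\subseteq N_1$. Choose witnesses $x\in N_1\setminus N_2$ and $y\in N_2\setminus N_1$; both are nonzero since $0$ lies in every submodule. Applying the hypothesis yields $x\mid y$ or $y\mid x$. In the first case $y\in Rx\subseteq N_1$ contradicts $y\notin N_1$, and in the second case $x\in Ry\subseteq N_2$ contradicts $x\notin N_2$. Hence no such incomparable pair exists, and $M$ is uniserial.

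The one step needing genuine care --- and the only real obstacle --- is verifying that the witnesses $x$ and $y$ actually lie in $M^{\#}$, since the hypothesis quantifies only over nonzero nongenerators. The key observation is that a witness cannot generate $M$: if $Rx=M$, then $N_1\supseteq Rx=M$ forces $N_1=M$ and hence $N_2\subseteq N_1$, contradicting the choice of $x$; the identical argument rules out $Ry=M$. Thus $x,y\in M^{\#}$ and the divisibility hypothesis is legitimately applicable, so I would insert this verification immediately before invoking it in the converse.
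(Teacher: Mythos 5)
Your proof is correct and follows essentially the same route as the paper's: pick incomparable submodules, extract witnesses from the set differences, and derive a contradiction from the divisibility hypothesis. The one place you go beyond the paper is in verifying that the witnesses actually lie in $M^{\#}$ (the paper simply asserts $n,l\in M^{\#}$ without comment), and your argument for this --- a generator witness would force one submodule to be all of $M$, contradicting incomparability --- is exactly the justification that step needs.
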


\begin{proof}
The first direction of the statement is straightforward. Conversely, suppose
that $m\mid n$ or $n\mid m$ for every $m,n\in M^{\#}$. Now, we will show that
$M\ $is uniserial. Assume that there are submodules $N$ and $L$ of $M$ such
that $N\not \subseteq L$ and $L\not \subseteq N$. It follows that there exist
elements $n\in N\setminus L$ and $l\in L\setminus N$. Then we have $n,l\in
M^{\#}$.\ By the assumption, $n\mid l$ or $l\mid n$, and thus we have
$Rn\subseteq Rl\subseteq L$ or $Rl\subseteq Rn\subseteq N,$ which both of them
are contradictions. Hence, $M$ is an uniserial module.
\end{proof}

\begin{theorem}\label{tnested}
Let $M$ be an $R$-module. Then, $M$ is an uniserial module if and only if $D(M)$ is a nested topology.
\end{theorem}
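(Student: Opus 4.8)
The plan is to prove the equivalence by chaining together two results already established in the excerpt. The key observation is that nestedness of $D(M)$ is a statement about the basis $\{U_m\}_{m \in M^\#}$, while uniseriality of $M$ has just been reformulated (in Lemma \ref{divide}) as a divisibility condition on pairs of elements of $M^\#$. Since Proposition \ref{Prop1}(ii) translates divisibility directly into inclusions of basic open sets, the two conditions should match up almost term for term.

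First I would invoke the cited criterion from \cite[Lemma $1$]{YiKo}: a topological space with basis $\mathcal{B}$ is nested if and only if any two basis elements are comparable under inclusion. Applied to $D(M)$ with its basis $\{U_m\}_{m \in M^\#}$, this reduces the claim to showing that $M$ is uniserial if and only if $U_m$ and $U_n$ are comparable for every $m,n \in M^\#$. By Lemma \ref{divide}, uniseriality is equivalent to the statement that for every $m,n \in M^\#$ we have $m \mid n$ or $n \mid m$. So it suffices to see that this divisibility dichotomy is equivalent to comparability of the corresponding basic open sets.

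For this last equivalence I would use Proposition \ref{Prop1}(ii), which states $m \mid n \iff U_m \subseteq U_n$. Reading off both directions: if $m \mid n$ or $n \mid m$, then $U_m \subseteq U_n$ or $U_n \subseteq U_m$, so $U_m$ and $U_n$ are comparable; conversely, if $U_m \subseteq U_n$ or $U_n \subseteq U_m$, then by the same biconditional $m \mid n$ or $n \mid m$. Thus the divisibility dichotomy on $M^\#$ holds for all pairs precisely when every pair of basis elements is comparable. Stringing the equivalences together—$M$ uniserial $\iff$ divisibility dichotomy on $M^\#$ (Lemma \ref{divide}) $\iff$ pairwise comparability of the $U_m$ (Proposition \ref{Prop1}(ii)) $\iff$ $D(M)$ nested (\cite[Lemma $1$]{YiKo})—completes the argument.

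I do not anticipate a genuine obstacle here, since each link in the chain is a direct application of a stated result; the proof is essentially a matter of assembling the pieces in the right order. The only point requiring a small amount of care is that the basis criterion from \cite[Lemma $1$]{YiKo} is phrased for an arbitrary basis, so I would make sure to state explicitly that I am applying it to the specific basis $\{U_m\}_{m \in M^\#}$ of $D(M)$, and that every equivalence class in $\operatorname{EC}(M^\#)$ is of the form $[m]$ with $m \in M^\#$ so that no basic open set is omitted.
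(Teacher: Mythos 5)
Your proposal is correct and follows essentially the same route as the paper: both arguments chain Lemma \ref{divide} (uniseriality as the divisibility dichotomy on $M^{\#}$) with Proposition \ref{Prop1}(ii) ($m \mid n \iff U_m \subseteq U_n$) and the basis criterion for nestedness from \cite[Lemma 1]{YiKo}. Your write-up is, if anything, slightly more explicit about the final link to the basis criterion than the paper's own proof.
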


\begin{proof}
The proof holds in view of Lemma \ref{divide}. Suppose that $m\mid n$ or
$n\mid m$ for all $m,n\in M^{\#}$. It follows that either $U_{m}\subseteq
U_{n}$ or $U_{n}\subseteq U_{m}$, which leads to the result. Conversely, choose $m,n\in
M-\{0\}.$ It is clear that if at least one of them is a generator, then the
proof is done. Now, assume that they are not generators, that is, $m,n\in
M^{\#}$. Then, $U_{m}\subseteq U_{n}$ or $U_{n}\subseteq U_{m}$ by the
assumption. This implies that $m\ |\ n\ $or $n\ |\ m,\ $and so $M$ is a
uniserial module.
\end{proof}

A topological space is called an \textit{Alexandrov space} if the intersection
of an arbitrary family of open sets is also open \cite{Arenas}. This is
equivalent to the existence of a minimal neighborhood for every point $x$ of
$X$.

\begin{proposition}
\label{pAlex}$D(M)$ is an Alexandrov space.
\end{proposition}

\begin{proof}
The proof follows directly from Lemma \ref{smallU_m}.
\end{proof}

\begin{proposition}
\label{closure} Let $M$ be an $R$-module with $[m]\in\operatorname{EC}%
(M^{\#})$. Then,
\[
\overline{\{[m]\}}=\{[n]\in\operatorname{EC}(M^{\#}):m\mid n\}.
\]

\end{proposition}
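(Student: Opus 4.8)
The plan is to characterize membership in the closure $\overline{\{[m]\}}$ by exploiting the minimal neighborhoods furnished by Lemma \ref{smallU_m}. Recall that a point $[n]$ lies in $\overline{\{[m]\}}$ precisely when every open set containing $[n]$ meets $\{[m]\}$, that is, when every open neighborhood of $[n]$ contains $[m]$. The feature of $D(M)$ I would use is that $U_{n}$ is the smallest open set containing $[n]$; this collapses the universal quantifier over open neighborhoods down to a single test against $U_{n}$.

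First I would establish that $[n]\in\overline{\{[m]\}}$ if and only if $[m]\in U_{n}$. For the forward direction, since $U_{n}$ is itself an open neighborhood of $[n]$, any $[n]$ in the closure must satisfy $U_{n}\cap\{[m]\}\neq\emptyset$, forcing $[m]\in U_{n}$. For the reverse direction, if $[m]\in U_{n}$ then, because every open set $O$ containing $[n]$ satisfies $U_{n}\subseteq O$ by minimality, we obtain $[m]\in O$; hence every neighborhood of $[n]$ meets $\{[m]\}$, and so $[n]\in\overline{\{[m]\}}$.

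The second and final step is to unwind the definition of $U_{n}$: by construction $U_{n}=\{[x]\in\operatorname{EC}(M^{\#}):x\mid n\}$, so the condition $[m]\in U_{n}$ is equivalent to $m\mid n$. Chaining the two equivalences gives $[n]\in\overline{\{[m]\}}$ if and only if $m\mid n$, which is exactly the claimed description of the closure.

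I do not anticipate a genuine obstacle here: the whole argument rests on the minimal-neighborhood property already established in Lemma \ref{smallU_m} (and reflected in the Alexandrov property of Proposition \ref{pAlex}), together with the defining relation of $U_{n}$. The only point requiring a little care is to phrase the closure criterion in terms of arbitrary neighborhoods rather than basis elements; but since $\{U_{x}\}_{x\in M^{\#}}$ is a basis and each $U_{n}$ is the smallest neighborhood of $[n]$, testing against $U_{n}$ alone is legitimate.
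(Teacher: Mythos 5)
Your proof is correct and follows essentially the same route as the paper: both arguments reduce the closure test to the single minimal neighborhood $U_{n}$ via Lemma \ref{smallU_m} and then translate $[m]\in U_{n}$ into $m\mid n$ using the definition of the basic open sets. No gaps.
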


\begin{proof}
Let $[n]\in\overline{\{[m]\}}$ for some $n\in M^{\#}$. It follows that any
open set containing $[n]$ necessarily contains $[m]$. Since $\left[  n\right]
\in U_{n}$, we have $\left[  m\right]  \in U_{n},\ $that is, $m\ |\ n.\ $%
Conversely, suppose $m\mid n$ for some $n\in M^{\#}$ and choose an open set
$O$ such that $O\cap\{[n]\}\neq\emptyset$. It is obvious that $[m]\in
U_{m}\subseteq U_{n}\subseteq O$ by Lemma \ref{smallU_m}. Thus, $\left[
n\right]  \in\overline{\{[m]\}}$.
\end{proof}

Let $M$ be an $R$-module. The set of all irreducible elements of $M$ is
denoted by $\operatorname{Irr}(M).\ $It is clear that if $M\ $is a cyclic
module, then $\operatorname{Irr}(M)=u(M).\ $ where $u(M)$ is the set of all generators of $M$. If $M\ $is a noncyclic module,
then $\operatorname{Irr}(M)\cap u(M)=\emptyset.$ Recall from \cite{Munkres}
that a topological space $X\ $is said to be a \textit{Baire space} if for
every countable family $\left\{  F_{n}\right\}  _{n\in%
%TCIMACRO{\U{2115} }%
%BeginExpansion
\mathbb{N}
%EndExpansion
}$ of closed sets having an empty interior, then $%
%TCIMACRO{\dbigcup \limits_{n\in\mathbb{N}}}%
%BeginExpansion
{\displaystyle\bigcup\limits_{n\in\mathbb{N}}}
%EndExpansion
F_{n}\ $also has an empty interior, or equivalently, for every countable family
$\left\{  O_{n}\right\}  _{n\in%
%TCIMACRO{\U{2115} }%
%BeginExpansion
\mathbb{N}
%EndExpansion
}$ of open dense sets, their intersection $%
%TCIMACRO{\dbigcap \limits_{n\in\mathbb{N}}}%
%BeginExpansion
{\displaystyle\bigcap\limits_{n\in\mathbb{N}}}
%EndExpansion
O_{n}\ $is dense in $X\ $(See, \cite[Lemma 48.1]{Munkres}). In the following
result, we investigate the open dense set in $D(M)$ for any factorial module
$M,$ and use this fact to prove $D(M)$ is a Baire space for factorial modules.

\begin{theorem}
\label{dense}Let $M$ be a factorial $R$-module with the property that
$\operatorname{Irr}(M)\cap u(M)=\emptyset$. Then, $\operatorname{EC}%
(\operatorname{Irr}(M))$ is dense in $D(M)$. Furthermore, each open dense set
in $D(M)$ contains $\operatorname{EC}(\operatorname{Irr}(M))$.
\end{theorem}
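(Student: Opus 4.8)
The plan is to treat the two assertions separately, both resting on the observation that in a factorial module the elements of $\operatorname{EC}(\operatorname{Irr}(M))$ are precisely the isolated points of $D(M)$. Indeed, since every primitive element of a torsion free module is irreducible and the converse holds in factorial modules, for $m\in M^{\#}$ the class $[m]$ is an isolated point of $D(M)$ if and only if $m$ is irreducible, by Proposition \ref{isolated}. Note also that the hypothesis $\operatorname{Irr}(M)\cap u(M)=\emptyset$ guarantees $\operatorname{Irr}(M)\subseteq M^{\#}$, so that $\operatorname{EC}(\operatorname{Irr}(M))$ is genuinely a subset of $\operatorname{EC}(M^{\#})$ and the statement makes sense.

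For the first assertion I would show that $\operatorname{EC}(\operatorname{Irr}(M))$ meets every nonempty open set. Given a nonempty open $O$, I pick $[x]\in O$; by Lemma \ref{smallU_m} the basic set $U_{x}$ is the smallest open set containing $[x]$, hence $U_{x}\subseteq O$, and it suffices to produce a point of $\operatorname{EC}(\operatorname{Irr}(M))$ inside $U_{x}$. Here the factorial hypothesis does the work: writing an irreducible factorization $x=q_{1}q_{2}\cdots q_{k}m^{\prime}$ with $q_{i}\in R$ irreducible and $m^{\prime}\in M$ irreducible, we have $x\in Rm^{\prime}$, i.e.\ $m^{\prime}\mid x$, so $[m^{\prime}]\in U_{x}$ by definition of $U_{x}$. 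Since $m^{\prime}$ is irreducible it lies in $M^{\#}$, whence $[m^{\prime}]\in U_{x}\cap\operatorname{EC}(\operatorname{Irr}(M))$ and therefore $O\cap\operatorname{EC}(\operatorname{Irr}(M))\neq\emptyset$. This proves density.

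For the second assertion, let $O$ be any open dense subset of $D(M)$ and let $[m]\in\operatorname{EC}(\operatorname{Irr}(M))$. By the remark above $[m]$ is an isolated point, so $\{[m]\}$ is open and nonempty. Density of $O$ then forces $O\cap\{[m]\}\neq\emptyset$, and since the right-hand set is a singleton this gives $[m]\in O$. As $[m]$ was arbitrary, $\operatorname{EC}(\operatorname{Irr}(M))\subseteq O$, as claimed.

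The argument is essentially an unwinding of definitions; the only delicate points are bookkeeping ones, namely confirming that the module-theoretic relation $m^{\prime}\mid x$ is the one built into the basis $U_{x}$ (so that the irreducible factor really yields a point of $U_{x}$) and invoking the factorial property in two places — once to obtain an irreducible divisor of an arbitrary element, and once, through Proposition \ref{isolated}, to know that every irreducible class is isolated. I expect the main (minor) obstacle to be making the first of these explicit: one must read off from $x=q_{1}\cdots q_{k}m^{\prime}$ that $Rx\subseteq Rm^{\prime}$ and hence that $[m^{\prime}]\in U_{x}$, and check the degenerate case $k=0$ where $x$ is itself irreducible and $[x]=[m^{\prime}]$ already lies in $\operatorname{EC}(\operatorname{Irr}(M))$.
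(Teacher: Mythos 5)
Your proposal is correct and follows essentially the same route as the paper: both use the irreducible factorization $x=q_1\cdots q_k m'$ to exhibit an irreducible divisor $m'$ of $x$ (the paper phrases this via Proposition \ref{closure} as $[x]\in\overline{\{[m']\}}$, you phrase it as $[m']\in U_x$, which are the same fact), and both derive the second assertion from Proposition \ref{isolated} by noting that $U_m=\{[m]\}$ is open for irreducible $m$, so density forces $[m]\in O$.
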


\begin{proof}
We only need to show that $\operatorname{EC}(M^{\#})\subseteq\overline
{\operatorname{EC}(\operatorname{Irr}(M))}$. Any nonzero element $m$ of $M$
has an irreducible factorization $m=r_{1}r_{2}\dots r_{n}m^{\prime}$ where
$r_{1},\dots,r_{n}$ are irreducibles in $R$ and $m^{\prime}$ is irreducible in
$M$. Then, $[m]\in\{\overline{[m^{\prime}]}\}\subseteq\overline
{\operatorname{EC}(\operatorname{Irr}(M))}$ by Proposition \ref{closure} which
implies that $\operatorname{EC}(M^{\#})=\overline{\operatorname{EC}%
(\operatorname{Irr}(M))}$. In addition, take an open dense set $O$ in $D(M)$
then $\overline{O}=\operatorname{EC}(M^{\#})$. Now, let $m\in
\operatorname{Irr}(M)$ then $\left[  m\right]  \in\overline{\operatorname{EC}%
(\operatorname{Irr}(M))}=\operatorname{EC}(M^{\#})=\overline{O}$. It follows
that for any open set $O^{\prime}$ containing $[m]$ intersects with $O$. In
particular, one may choose $O^{\prime}=U_{m}$, and in this case $O^{\prime
}=U_{m}=\{[m]\}$ by Proposition \ref{isolated}. Since $U_{m}\cap
O\neq\emptyset$ we get $[m]\in O$. Therefore, $\operatorname{EC}%
(\operatorname{Irr}(M))\subseteq O$.
\end{proof}

\begin{theorem}
\label{tBaire}Let $M$ be a factorial $R$-module with the property that
$\operatorname{Irr}(M)\cap u(M)=\emptyset$. Then $D(M)$ is a Baire space.
\end{theorem}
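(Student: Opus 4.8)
The plan is to use the Baire category characterization already quoted from Munkres: a space is Baire if, for every countable family $\{O_n\}_{n\in\mathbb{N}}$ of open dense sets, the intersection $\bigcap_{n\in\mathbb{N}}O_n$ is dense. So I would fix such a family and aim to show their intersection is dense, exploiting the structure of $D(M)$ that Theorem \ref{dense} has just exposed.

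First I would invoke Theorem \ref{dense}: under the standing hypotheses (factorial $M$ with $\operatorname{Irr}(M)\cap u(M)=\emptyset$), every open dense set in $D(M)$ contains $\operatorname{EC}(\operatorname{Irr}(M))$. Applying this to each $O_n$ gives $\operatorname{EC}(\operatorname{Irr}(M))\subseteq O_n$ for every $n$, and hence $\operatorname{EC}(\operatorname{Irr}(M))\subseteq\bigcap_{n\in\mathbb{N}}O_n$. This is the key observation that collapses the countable-intersection condition to a single containment, and it is really where the factorial hypothesis does all the work.

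Next I would close the loop on density. Since $\operatorname{EC}(\operatorname{Irr}(M))$ is itself dense in $D(M)$ by the first part of Theorem \ref{dense}, any set containing it is also dense: if $A\subseteq B$ and $\overline{A}=\operatorname{EC}(M^{\#})$, then $\overline{B}\supseteq\overline{A}=\operatorname{EC}(M^{\#})$, so $\overline{B}=\operatorname{EC}(M^{\#})$. Taking $A=\operatorname{EC}(\operatorname{Irr}(M))$ and $B=\bigcap_{n\in\mathbb{N}}O_n$ shows the intersection is dense, which is exactly the Baire condition. Notably the argument does not even require countability of the family, reflecting how strong the Alexandrov/factorial structure is here.

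The only genuine subtlety I anticipate is a degenerate edge case: the argument implicitly assumes $\operatorname{EC}(\operatorname{Irr}(M))$ is nonempty, which is guaranteed by the factorial hypothesis since every nonzero $m$ factors through an irreducible $m'\in M$, so $\operatorname{Irr}(M)\neq\emptyset$ whenever $M^{\#}\neq\emptyset$ (i.e.\ whenever $M$ is not simple, per our standing assumption). Everything else is a one-line appeal to Theorem \ref{dense} plus the monotonicity of closure, so there is no real obstacle to overcome — the heavy lifting was already done in establishing the dense-set characterization.
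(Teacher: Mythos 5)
Your argument is correct and is exactly the paper's intended route: the paper's proof is a one-line appeal to Theorem \ref{dense} and \cite[Lemma 48.1]{Munkres}, and you have simply unpacked that appeal (every open dense set contains $\operatorname{EC}(\operatorname{Irr}(M))$, which is itself dense, so any countable intersection of open dense sets is dense). No differences worth noting.
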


\begin{proof}
Follows from Theorem \ref{dense} and \cite[Lemma 48.1]{Munkres}.
\end{proof}

\bigskip The concept of Noetherian rings has a distinguished place in
commutative algebra and algebraic geometry. A ring $R$ is said to be a
\textit{Noetherian ring} if the ascending chain condition holds for an
arbitrary family of its ideals, or equivalently, each ideal of $R$ is finitely
generated \cite{Sharp}. Similarly, recall from \cite{Hars} that a topological
space $X$ is called a \textit{Noetherian space} if the descending chain
condition holds for closed subsets of $X$, alternatively, an ascending chain
condition holds for open subsets of $X$. In the literature, there is a strong
connection between Noetherian rings and Noetherian space in terms of Zariski
topology on $Spec(R):$ if $R\ $is a Noetherian ring, then the Zariski topology
on $Spec(R)$ is a Noetherian space, but the converse is not true in general
\cite{Ohm}. The reader may consult \cite{Hamed}, \cite{Ohm} and
\cite{OzNaTeKo} for more details on rings satisfying the converse of the statement.

Let $X$ be a topological space with the basis $\mathfrak{B}$.\ The authors in
\cite[Theorem 6]{YiKo} showed that $X$ is a Noetherian space if and only if
$X$ satisfies the minimum condition on closed sets if and only if $X\ $satisfies
the maximum condition on open sets if and only if any subfamily $\mathfrak{B}%
^{\prime}$ of $\mathfrak{B}$ has a maximal element. Now, we are ready to
determine the conditions under which $D(M)$ is a Noetherian space.

\begin{theorem}
\label{tNoetherian}Let $M$ be a torsion free $R$-module. Then $D(M)$ is a
Noetherian space if and only if either $M\ $is a simple module or $M$ is a
vector space over the field $R.\ $
\end{theorem}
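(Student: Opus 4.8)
The plan is to reduce the topological assertion to a chain condition on cyclic submodules by means of the basis characterization of Noetherianness, and then to treat the two implications separately. First I would invoke \cite[Theorem 6]{YiKo}: since $\{U_{m}\}_{m\in M^{\#}}$ is a basis for $D(M)$, the space $D(M)$ is Noetherian if and only if every subfamily of $\{U_{m}\}_{m\in M^{\#}}$ has a maximal element under inclusion. By Proposition \ref{Prop1}(ii) we have $U_{m}\subseteq U_{n}\iff m\mid n\iff Rn\subseteq Rm$, so a member $U_{m}$ of a subfamily $\{U_{m}:m\in S\}$ is maximal precisely when $Rm$ is minimal among $\{Rn:n\in S\}$. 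Hence I would record the working criterion: $D(M)$ is Noetherian if and only if the collection $\{Rm:m\in M^{\#}\}$ of proper nonzero cyclic submodules satisfies the minimum condition, i.e.\ there is no infinite strictly descending chain $Rm_{1}\supsetneq Rm_{2}\supsetneq\cdots$ with every $m_{i}\in M^{\#}$.

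For the implication $(\Leftarrow)$ I would dispatch both cases quickly. If $M$ is simple then $\operatorname{EC}(M^{\#})=\emptyset$ and $D(M)$ is trivially Noetherian. If $R$ is a field and $M$ is an $R$-vector space, then for nonzero $m,n$ with $m\mid n$ one writes $n=rm$ with $r\neq0$, whence $m=r^{-1}n$ and $Rm=Rn$; thus the cyclic submodules $Rm$ (the one-dimensional subspaces) are pairwise incomparable and the minimum condition holds vacuously, so $D(M)$ is Noetherian.

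For $(\Rightarrow)$ I would assume $M$ torsion free, not simple, and $D(M)$ Noetherian, and show $R$ is a field. Since $M$ is not simple, some nonzero element fails to generate $M$ (otherwise every nonzero element would generate $M$, forcing $M$ to be simple), so $M^{\#}\neq\emptyset$; fix $m\in M^{\#}$. Arguing by contradiction, suppose $R$ is not a field and choose a nonzero non-unit $a\in R$, and consider the elements $a^{k}m$ for $k\geq0$. Because $M$ is torsion free and $a\neq0$, each $a^{k}m\neq0$, and $R(a^{k}m)\subseteq Rm\neq M$, so every $a^{k}m\in M^{\#}$. The key step is strictness of $R(a^{k+1}m)\subseteq R(a^{k}m)$: an equality $R(a^{k}m)=R(a^{k+1}m)$ gives $a^{k}m=ra^{k+1}m$ for some $r\in R$, so $a^{k}m(1-ra)=0$, and torsion-freeness forces $ra=1$, contradicting that $a$ is a non-unit. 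This produces an infinite strictly descending chain $Rm\supsetneq R(am)\supsetneq R(a^{2}m)\supsetneq\cdots$ of proper nonzero cyclic submodules, contradicting the minimum condition. Hence $R$ is a field and $M$ is a vector space over $R$.

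The only genuine obstacle is the forward direction, specifically verifying that the chain $\{R(a^{k}m)\}_{k\geq0}$ is strictly descending while staying inside $M^{\#}$; this is exactly where the torsion-free hypothesis is indispensable, since it simultaneously keeps $a^{k}m$ nonzero and turns a would-be coincidence of consecutive terms into the relation $ra=1$. Everything else is a routine translation through Proposition \ref{Prop1} and \cite[Theorem 6]{YiKo}.
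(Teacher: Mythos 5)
Your proposal is correct and follows essentially the same route as the paper: both directions rest on the basis criterion from \cite[Theorem 6]{YiKo} (recast by you as the minimum condition on the cyclic submodules $Rm$, $m\in M^{\#}$), and the forward implication uses exactly the same chain $Rm\supsetneq R(am)\supsetneq R(a^{2}m)\supsetneq\cdots$ with torsion-freeness forcing $ra=1$. The only difference is cosmetic: the paper argues directly that every nonzero $a$ is a unit via a maximal element $U_{a^{t}m}$ of $\{U_{a^{k}m}\}_{k}$, while you phrase it contrapositively as a strictly descending chain.
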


\begin{proof}
$\left(  \Rightarrow\right)  :\ $Suppose that $D(M)$ is a Noetherian space.
Then, every subfamily $\mathfrak{B}^{\prime}$ of the basis $\mathfrak{B}%
=\left\{  U_{m}\right\}  _{m\in M^{\#}}$ has a maximal element by \cite[Theorem
6]{YiKo}. If there is no element $m\in M^{\#},\ $then $M$ is simple. Now,
assume that $M\ $is not simple so one may choose $m\in M^{\#}.\ $Now, we will
show that $R\ $is a field. Take an element $0\neq a\in R.\ $Since $M\ $is
torsion free, we have $0\neq a^{k}m\in M^{\#}$ and $a^{k}m\ |\ a^{k+1}m$ for
all $k\in%
%TCIMACRO{\U{2115} }%
%BeginExpansion
\mathbb{N}
%EndExpansion
.\ $Now, put $\mathfrak{B}^{\prime}=\left\{  U_{a^{k}m}\right\}  _{k\in%
%TCIMACRO{\U{2115} }%
%BeginExpansion
\mathbb{N}
%EndExpansion
}$ and then $\mathfrak{B}^{\prime}$ has a maximal element, say $U_{a^{t}m}%
.\ $Since $U_{a^{t}m}\subseteq U_{a^{t+1}m},\ $we conclude that $U_{a^{t}%
m}=U_{a^{t+1}m}$ which yields that $a^{t}m=ra^{t+1}m$ for some $r\in R.\ $As
$M\ $is a torsion free module, we have $ra=1$, that is, $a$ is a unit. Hence,
$R\ $is a field.

$\left(  \Leftarrow\right)  :$\ Suppose first $M\ $is a simple module then
$D(M)$ is an empty space, so it is trivially a Noetherian space. Now, assume that
$M$ is a vector space over the field $R.\ $Let $\mathfrak{B}^{\prime}$ be a
subfamily of the basis $\mathfrak{B}=\left\{  U_{m}\right\}  _{m\in M^{\#}}%
.$\ Let $U_{m_{1}}\subseteq U_{m_{2}}$ for some $U_{m_{1}},U_{m_{2}}%
\in\mathfrak{B}^{\prime}.$\ Then $m_{1}\ |\ m_{2}$ so $m_{2}=am_{1}$ for some
$0\neq a\in R.\ $Since $R$ is a field, $m_{1}=a^{-1}m_{2}$ which implies that
$m_{2}\ |\ m_{1}.\ $Thus, we have $U_{m_{1}}=U_{m_{2}}$ which means that every
nonempty subcollection $\mathfrak{B}^{\prime}$ of $\mathfrak{B}$ has a maximal element. Then by
\cite[Theorem 6 (iv)]{YiKo}, $D(M)$ is a Noetherian space.
\end{proof}

\section{Pseudo simple modules vs $T_1$-space}

\bigskip From Example \ref{ex1} and Example \ref{ex2}, one may ask when
$D(M)\ $is a $T_{1}$-space? In this section, we introduce pseudo simple
modules, which are a new generalization of simple modules. We answer the
aforementioned question in the language of algebra by means of pseudo simple
modules. In this section, $R\ $is assumed to be a commutative ring with a
nonzero identity (not necessarily a domain).

\begin{definition}
An $R$-module $M$ is called a pseudo simple if $Rm$ is simple for each $0\neq
m\in M$ with $Rm\neq M$.\ In particular, a ring $R\ $is said to be a pseudo
simple ring if it is a pseudo simple module over itself.
\end{definition}

\begin{lemma}
\label{LemmaPseu} An $R$-module $M$ is a pseudo simple if and only if
$\operatorname{ann}(m)$ is a maximal ideal for every $0\neq m\in M$ with
$Rm\neq M$.
\end{lemma}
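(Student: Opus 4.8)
The plan is to prove the equivalence by relating the simplicity of the cyclic submodule $Rm$ to the maximality of its annihilator via the standard isomorphism $Rm \cong R/\ann(m)$. First I would fix an arbitrary $0 \neq m \in M$ with $Rm \neq M$ and recall that the map $R \to Rm$ sending $r \mapsto rm$ is a surjective $R$-module homomorphism with kernel exactly $\ann(m)$, so it induces an isomorphism of $R$-modules $R/\ann(m) \cong Rm$.

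The key observation is that simplicity is preserved under this isomorphism, so $Rm$ is simple if and only if $R/\ann(m)$ is simple as an $R$-module. The heart of the argument is then the correspondence between submodules of $R/\ann(m)$ and ideals of $R$ containing $\ann(m)$: under the canonical projection, $R$-submodules of $R/\ann(m)$ correspond bijectively and order-preservingly to ideals $I$ of $R$ with $\ann(m) \subseteq I$. Consequently $R/\ann(m)$ is simple, i.e.\ has exactly two submodules (itself and zero), precisely when there are exactly two ideals of $R$ containing $\ann(m)$, namely $\ann(m)$ and $R$ themselves, which is the definition of $\ann(m)$ being a maximal ideal.

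Assembling the two directions, if $M$ is pseudo simple then for each such $m$ the module $Rm$ is simple, so $R/\ann(m)$ is simple and hence $\ann(m)$ is maximal; conversely, if $\ann(m)$ is maximal for every such $m$, then $R/\ann(m)$ is simple and therefore so is $Rm$, giving that $M$ is pseudo simple. The one point requiring a small amount of care is that the definition of pseudo simple ranges over all $0 \neq m \in M$ with $Rm \neq M$, so the equivalence must be established pointwise for each such $m$ and then quantified; there is no genuine obstacle here, since the submodule correspondence is entirely standard and holds over any commutative ring with identity (the setting assumed in this section), not merely over a domain.
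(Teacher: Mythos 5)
Your proof is correct. It reaches the same conclusion as the paper but packages the argument differently: you route everything through the isomorphism $Rm\cong R/\operatorname{ann}(m)$ and the correspondence theorem between submodules of $R/\operatorname{ann}(m)$ and ideals of $R$ containing $\operatorname{ann}(m)$, so both directions fall out of the single equivalence ``$Rm$ simple $\Leftrightarrow$ $\operatorname{ann}(m)$ maximal.'' The paper instead argues elementwise and never mentions the isomorphism: for the forward direction it takes $a\notin\operatorname{ann}(m)$, notes $Ram=Rm$ by simplicity, and extracts $1-ra\in\operatorname{ann}(m)$; for the converse it takes a nonzero cyclic submodule $Ram$ and uses maximality to produce $1-ra\in\operatorname{ann}(m)$, forcing $Ram=Rm$ (implicitly using that every nonzero submodule contains a nonzero cyclic one). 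Your version is shorter and more conceptual, at the cost of invoking the standard machinery; the paper's is self-contained and makes the role of the element $1-ra$ explicit, which is the same computation that reappears in later proofs (e.g.\ Theorem \ref{T_1-p.simple}). You correctly flag the only delicate point, namely that $m\neq 0$ guarantees $\operatorname{ann}(m)\neq R$ so that $R/\operatorname{ann}(m)$ is a nonzero module; with that noted, there is no gap.
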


\begin{proof}
Suppose that $M\ $is a pseudo simple module and choose $0\neq m\in M$ with
$Rm\neq M$. Let $a\not \in \operatorname{ann}(m)$ then $Ram$ is a nonzero
submodule of $Rm$. It follows that $Ram=Rm$ and then $m=ram$ for some $r\in
R$. Hence, $1-ra\in\operatorname{ann}(m)$, that is, $\operatorname{ann}(m)$ is
a maximal ideal of $R$. Conversely, suppose that $\operatorname{ann}(m)$ is a
maximal ideal for each $0\neq m\in M$ with $Rm\neq M$. Now, take a nonzero
cyclic submodule of $Rm$, namely $Ram=N\subseteq Rm$. Since $N$ is a nonzero
submodule then $a\not \in \operatorname{ann}(m)$. We can now conclude that
$1-ra\in\operatorname{ann}(m)$ by the maximality of $\operatorname{ann}(m)$.
This gives $N=Ram=Rm,\ $and thus $Rm\ $is simple.
\end{proof}

We will list down examples with a non-example of pseudo simple modules.

\begin{example}
(i) Every vector space $V$ is a pseudo simple module

(ii)\ Every simple module is trivially a pseudo simple module.

(iii)$\ {\mathbb{Z}}_{2}\times{\mathbb{Z}}_{2}$ is a pseudo simple as
${\mathbb{Z}}$-module.

(iv) ${\mathbb{Z}}$-module ${\mathbb{Z}}_{6}$ is pseudo simple module apart
from the examples $i)$ and $ii)$. Let $\overline{0}\neq\overline{m}%
\in{\mathbb{Z}}_{6}$ be a nonzero element such that ${\mathbb{Z}}\overline
{m}\neq{\mathbb{Z}}_{6}$. Then, $\overline{m}$ must be in the set
$\{\overline{2},\overline{3},\overline{4}\}$. Observing that
$\operatorname{ann}(\overline{2})=\operatorname{ann}(\overline{4}%
)=3{\mathbb{Z}}$ and $\operatorname{ann}(\overline{3})=2{\mathbb{Z}}$ are both
maximal ideals of ${\mathbb{Z}}$. Therefore, ${\mathbb{Z}}$-module
${\mathbb{Z}}_{6}$ is a pseudo simple ${\mathbb{Z}}$-module.

(v)$\ {\mathbb{Z}}$-module ${\mathbb{Z}}_{12}$ is not a pseudo simple module
since $\operatorname{ann}(\overline{3})=4{\mathbb{Z}}$ is not a maximal ideal.
\end{example}

In the following discussion, we are interested in the following problem. Given
a finitely generated abelian group $G$, is there a way to decide whether $G$
is a pseudo simple ${\mathbb{Z}}$-module or not? The following theorem
provides an affirmative answer.

\begin{theorem}
\label{fgPS} A finitely generated abelian group $G$ is a pseudo simple
${\mathbb{Z}}$-module if and only if $G\cong{\mathbb{Z}}_{p}\oplus{\mathbb{Z}%
}_{q}$ for some distinct primes $p$ and $q$ or $({\mathbb{Z}}_{p})^{n}$ where
$n\geq1$.
\end{theorem}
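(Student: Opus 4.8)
The plan is to combine the structure theorem for finitely generated abelian groups with the annihilator reformulation of pseudo-simplicity recorded in Lemma~\ref{LemmaPseu}. Over $R=\mathbb{Z}$ the maximal ideals are exactly the $p\mathbb{Z}$ with $p$ prime, so Lemma~\ref{LemmaPseu} says that $G$ is pseudo simple precisely when $\operatorname{ann}(m)=p\mathbb{Z}$ for every nonzero $m$ with $\mathbb{Z}m\neq G$; equivalently, every nonzero element that fails to generate $G$ has prime order. I would take this order-theoretic criterion as the working characterization throughout, since it converts the module statement into an elementary group-theoretic one.

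First I would show that the free rank of $G$ must vanish. If $G\cong\mathbb{Z}^{r}\oplus T$ with $r\geq1$, I can exhibit an element $m$ of infinite order that does not generate $G$ (a proper multiple of a basis vector, or any basis vector when $G\neq\mathbb{Z}$); then $\operatorname{ann}(m)=0$, which is not maximal in $\mathbb{Z}$, so $G$ is not pseudo simple. Hence a pseudo simple $G$ is finite, and the problem reduces to classifying the finite abelian groups all of whose nonzero non-generators have prime order.

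Next I would split into the non-cyclic and cyclic cases. If $G$ is non-cyclic then no single element generates $G$, so the criterion forces every nonzero element to have prime order; a finite abelian group with this property cannot have two distinct prime divisors, because an element of order $p$ added to one of order $q$ (which exist by Cauchy's theorem and intersect trivially) would have composite order $pq$ and would be a non-generator. Thus $|G|$ is a prime power in which every nonzero element has order $p$, i.e. $G$ is elementary abelian, giving $G\cong(\mathbb{Z}_{p})^{n}$ with $n\geq2$. If instead $G\cong\mathbb{Z}_{n}$ is cyclic, the nonzero non-generators are exactly the elements whose order is a proper divisor $d>1$ of $n$, and since $\mathbb{Z}_{n}$ realizes every such order the criterion becomes: every proper divisor of $n$ exceeding $1$ is prime.

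The main obstacle I anticipate is the clean enumeration of the integers $n$ satisfying this divisor condition and their translation into the stated normal forms. Examining the factorization $n=p_{1}^{a_{1}}\cdots p_{k}^{a_{k}}$, if two distinct primes occur then $p_{1}p_{2}$ is a composite proper divisor, and once an exponent grows large enough a composite prime-power divisor appears, so the admissible $n$ are severely constrained; the two-prime squarefree case $n=pq$ is then rewritten as $\mathbb{Z}_{p}\oplus\mathbb{Z}_{q}$ by the Chinese Remainder Theorem, while the single-prime case merges with the elementary abelian family into $(\mathbb{Z}_{p})^{n}$. I would be most careful in verifying the converse for each listed family directly from Lemma~\ref{LemmaPseu}, and in scrutinizing the borderline cyclic prime-power groups, since it is precisely the small-exponent prime-power case where the divisor condition is delicate and where one must confirm exactly which cyclic $p$-groups survive.
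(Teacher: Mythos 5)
Your overall strategy---the structure theorem combined with the annihilator criterion of Lemma \ref{LemmaPseu}---is the same as the paper's, and your treatment of the free rank and of the non-cyclic case is correct. The genuine gap is in your final step, where you assert that ``the single-prime case merges with the elementary abelian family into $(\mathbb{Z}_{p})^{n}$.'' Your own divisor criterion for the cyclic case (every proper divisor of $n$ exceeding $1$ is prime) is right, but its solution set is $n\in\{p,\,p^{2},\,pq\}$, not $\{p,\,pq\}$: the group $\mathbb{Z}_{p^{2}}$ satisfies it, and it genuinely is pseudo simple, since its only nonzero non-generators are the classes $\overline{pk}$ with $p\nmid k$, each of which generates $p\mathbb{Z}_{p^{2}}\cong\mathbb{Z}_{p}$ and has annihilator $p\mathbb{Z}$. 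Since $\mathbb{Z}_{p^{2}}$ is isomorphic neither to $(\mathbb{Z}_{p})^{n}$ nor to $\mathbb{Z}_{p}\oplus\mathbb{Z}_{q}$ with $p\neq q$, it cannot be merged into either family, and your argument cannot be completed as a proof of the statement as written.

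The blame is not entirely yours: the theorem as stated is inconsistent with the paper's own Corollary \ref{pseudoZn}, which correctly lists $n=p^{2}$ among the pseudo simple $\mathbb{Z}_{n}$. The paper's proof commits the corresponding error at the step where it tests $m=(\overline{1},\overline{0},\dots,\overline{0})$ to rule out exponents $m_{1}\geq 2$: when the decomposition has a single summand, that element is a generator of $G$, so pseudo-simplicity imposes no condition on it and the case $G\cong\mathbb{Z}_{p^{2}}$ slips through. The classification your method actually produces is $(\mathbb{Z}_{p})^{n}$ with $n\geq 1$, $\mathbb{Z}_{p^{2}}$, and $\mathbb{Z}_{p}\oplus\mathbb{Z}_{q}$ with $p\neq q$; you were right to flag the borderline cyclic prime-power groups as the delicate point, but the resolution is that one of them survives, so the theorem's list must be amended rather than your enumeration forced into it.
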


\begin{proof}
The fundamental theorem of finitely generated abelian groups asserts that any
finitely generated abelian group $G$ can be expressed as
\[
G\cong{\mathbb{Z}}^{n}\oplus{\mathbb{Z}}_{q_{1}}\oplus{\mathbb{Z}}_{q_{2}%
}\oplus\dots\oplus{\mathbb{Z}}_{q_{r}},\quad n\geq0
\]
where $q_{1},\dots,q_{t}$ are prime powers i.e., $q_{r}=p_{r}^{m_{r}}$ for
some prime numbers $p_{r}$ and $m_{r}\geq1$.

Assume $n\geq1$ and let $m=(1,1,\dots,1,\overline{0},\dots,\overline{0})\in G$
then $\operatorname{ann}(m)=0$ which is not maximal, so $G$ is not a pseudo
simple module by Lemma \ref{LemmaPseu}. Therefore, $n=0$ if $G$ is a finitely
generated abelian pseudo simple ${\mathbb{Z}}$-module.

Now, assume that $G\cong{\mathbb{Z}}_{p_{1}^{m_{1}}}\oplus{\mathbb{Z}}%
_{p_{2}^{m_{2}}}\oplus\dots\oplus{\mathbb{Z}}_{p_{r}^{m_{r}}}$ for some prime
numbers $p_{i}$ and $m_{i}\geq1$. Moreover, assume that $m_{1}\geq2$ and set
$m=(\overline{1},\overline{0},\dots,\overline{0})\in G$. This gives
$\operatorname{ann}(m)=p_{1}^{m_{1}}{\mathbb{Z}}$ which is not maximal since
$m_{1}\geq2$. Thus, $m_{i}=1$ for each $i$ if $G$ is pseudo simple.

Now assume that $G\cong{\mathbb{Z}}_{p_{1}}\oplus{\mathbb{Z}}_{p_{2}}%
\oplus\dots\oplus{\mathbb{Z}}_{p_{r}}$ for some prime numbers $p_{1}%
,\dots,p_{r}$. It is obvious that if $r=1$ then $G$ is simple and so pseudo
simple. If $r=2$ then $G\cong{\mathbb{Z}}_{p_{1}}\oplus{\mathbb{Z}}_{p_{2}}$.
In case of $p_{1}=p_{2}$, $\operatorname{ann}(m)=p_{1}{\mathbb{Z}}$ is maximal
for every $0\neq m\in G$. Thus, $G$ is a pseudo simple ${\mathbb{Z}}$-module.
If $p_{1}\neq p_{2}$, consider the nonzero non-generator element
$(\overline{0},\overline{0})\neq(\overline{x},\overline{y})=m^{\prime}\in G$
i.e., $\overline{x}=0$ and $\overline{y}\neq\overline{0}$ or $\overline{x}%
\neq0$ and $\overline{y}=\overline{0}$. It follows that $\operatorname{ann}%
(m^{\prime})=p_{2}{\mathbb{Z}}$ or $\operatorname{ann}(m^{\prime}%
)=p_{1}{\mathbb{Z}}$ which both of them are maximal. Therefore $G$ is a pseudo simple.

Lastly, assume $r\geq3$ and we will show that $\{p_{1},\dots,p_{r}\}$ contains
exactly one prime, that is, $p_{1}=\dots=p_{r}$. Assume $p_{1}\neq p_{2}$ and
set $m=(\overline{1},\overline{1},\overline{0},\dots,\overline{0})\in G$ then
$\operatorname{ann}(m)=p_{1}p_{2}{\mathbb{Z}}$ which is not maximal which
completes the proof.
\end{proof}

\begin{corollary}
\label{pseudoZn}${\mathbb{Z}}$-module ${\mathbb{Z}}_{n}$ is pseudo simple if
and only if $n=p,p^{2}$ or $pq$ where $p$ and $q$ are distinct primes.
\end{corollary}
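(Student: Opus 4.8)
The plan is to derive this as a direct corollary of Theorem~\ref{fgPS} by specializing to cyclic groups. The key observation is that for $G\cong{\mathbb{Z}}_n$ with $n>1$, the fundamental theorem of finitely generated abelian groups gives the primary decomposition $G\cong{\mathbb{Z}}_{p_1^{a_1}}\oplus\cdots\oplus{\mathbb{Z}}_{p_k^{a_k}}$ where $n=p_1^{a_1}\cdots p_k^{a_k}$ is the prime factorization of $n$ and the primes $p_1,\dots,p_k$ are \emph{distinct}. Since ${\mathbb{Z}}_n$ is cyclic, the Chinese Remainder Theorem forces these primary factors to have pairwise coprime orders, so no two of the primes $p_i$ can coincide.

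First I would invoke Theorem~\ref{fgPS}, which classifies finitely generated pseudo simple abelian groups as exactly those isomorphic to ${\mathbb{Z}}_p\oplus{\mathbb{Z}}_q$ for distinct primes $p,q$ or to $({\mathbb{Z}}_p)^n$ for $n\geq1$. Then I would ask which of these forms can occur as a cyclic group ${\mathbb{Z}}_n$. The form $({\mathbb{Z}}_p)^n$ is cyclic precisely when $n=1$ (giving ${\mathbb{Z}}_p$, so $n=p$ in the corollary's notation), since $({\mathbb{Z}}_p)^2$ has no element of order $p^2$ and is therefore not cyclic. The form ${\mathbb{Z}}_p\oplus{\mathbb{Z}}_q$ with $p\neq q$ is cyclic by the Chinese Remainder Theorem and equals ${\mathbb{Z}}_{pq}$. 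Conversely, I would check directly that ${\mathbb{Z}}_{p^2}$ is pseudo simple: for $m=\overline{1}$ one has $\operatorname{ann}(m)=p^2{\mathbb{Z}}$, which is not maximal, so by Lemma~\ref{LemmaPseu} the group ${\mathbb{Z}}_{p^2}$ is \emph{not} pseudo simple. This seems to conflict with the corollary allowing $n=p^2$, so that case needs the careful reading that when $n=p^2$ the only nonzero non-generator is $\overline{p}$ (generating the simple submodule of order $p$), whereas $\overline{1}$ generates all of ${\mathbb{Z}}_{p^2}$ and hence is a \emph{generator}, excluded from $M^{\#}$.

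The cleanest route is therefore to translate the condition directly via Lemma~\ref{LemmaPseu}: ${\mathbb{Z}}_n$ is pseudo simple iff $\operatorname{ann}(\overline{m})$ is maximal for every nonzero $\overline{m}$ with ${\mathbb{Z}}\overline{m}\neq{\mathbb{Z}}_n$, i.e.\ every proper nonzero cyclic submodule is simple. A nonzero non-generator $\overline{m}$ of ${\mathbb{Z}}_n$ has $\operatorname{ann}(\overline{m})=\tfrac{n}{\gcd(n,m)}{\mathbb{Z}}$, and this is maximal iff $\tfrac{n}{\gcd(n,m)}$ is prime. I would enumerate: if $n=p$ there are no non-generators so the condition is vacuous; if $n=p^2$ the proper nonzero cyclic submodules all have order $p$; if $n=pq$ the proper nonzero cyclic submodules have order $p$ or $q$; and conversely if $n$ has a prime factor to power $\geq2$ together with another prime, or a single prime to power $\geq3$, I can exhibit a non-generator whose annihilator index is composite. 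The main obstacle I anticipate is purely organizational rather than deep: correctly handling the boundary between generators and non-generators so that the $n=p^2$ case is not spuriously excluded, and making the enumeration of remaining composite $n$ exhaustive. Once the annihilator-index computation is set up, each case reduces to asking whether $n/\gcd(n,m)$ can be forced prime for all valid $m$, which is elementary number theory.
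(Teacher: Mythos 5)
Your proposal is correct, and your ``cleanest route'' is a more self-contained argument than the paper's. The paper's proof is a hybrid: it cites Theorem~\ref{fgPS} to force the number of distinct prime factors down to at most two and to settle $n=p_1p_2$, and then computes annihilators directly for $n=p^{\alpha}$, finding $\operatorname{ann}(\overline{m})=p\mathbb{Z}$ for every nonzero nongenerator when $\alpha=2$ and $\operatorname{ann}(\overline{p})=p^{\alpha-1}\mathbb{Z}$ when $\alpha\geq 3$. Your direct route via $\operatorname{ann}(\overline{m})=\tfrac{n}{\gcd(n,m)}\mathbb{Z}$ subsumes all of this in one computation: $\mathbb{Z}_n$ is pseudo simple iff $n/d$ is prime for every divisor $d$ of $n$ with $1<d<n$, i.e.\ iff $n$ has at most two prime factors counted with multiplicity, which is exactly $n=p,\,p^2,\,pq$. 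Moreover, the ``conflict'' you noticed is not a misreading on your part but a genuine defect in the paper: $\mathbb{Z}_{p^2}$ \emph{is} pseudo simple (its only nonzero nongenerators all generate the order-$p$ submodule), yet it is neither $\mathbb{Z}_p\oplus\mathbb{Z}_q$ nor $(\mathbb{Z}_p)^n$, so Theorem~\ref{fgPS} as stated is false; the gap in its proof is precisely the one you isolate, namely that the witness $m=(\overline{1},\overline{0},\dots,\overline{0})$ used to exclude $m_1\geq 2$ is a generator when there is only one summand, hence lies outside $M^{\#}$.

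The one loose end in your sketch is the converse enumeration: ``a prime factor to power $\geq 2$ together with another prime, or a single prime to power $\geq 3$'' omits $n$ with three or more distinct primes each to the first power, e.g.\ $n=pqr$, where $\overline{r}$ is a nonzero nongenerator with $\operatorname{ann}(\overline{r})=pq\mathbb{Z}$ not maximal. Your divisor criterion handles this case automatically, so the fix is simply to run the final case split through that criterion rather than through the ad hoc list.
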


\begin{proof}
The statement holds in view of Theorem \ref{fgPS}. Let $n=p_{1}^{\alpha_{1}%
}\dots p_{r}^{\alpha_{r}}$ for some distinct primes and $\alpha_{i}\geq1$.
Remark that $r\leq2$ by Theorem \ref{fgPS}. If $r=2$ then $\alpha_{1}%
=\alpha_{2}=1$, i.e., $n=p_{1}p_{2}$ for some distinct primes. Now, assume
that $n=p^{\alpha}$ for some $\alpha\geq1$. It is clear that, ${\mathbb{Z}}%
$-module ${\mathbb{Z}}_{n}$ is pseudo simple if $\alpha=1$. In case of
$\alpha=2$, choose $\overline{0}\neq\overline{m}\in{\mathbb{Z}}_{p^{2}}$ which
is not a generator then $m=pk$ for some $k\in{\mathbb{Z}}$ with $\gcd(k,p)=1$.
It follows that $\operatorname{ann}(\overline{m})=\operatorname{ann}%
(\overline{pk})=\operatorname{ann}(\overline{p})=p{\mathbb{Z}}$. Therefore,
${\mathbb{Z}}$-module ${\mathbb{Z}}_{p^{2}}$ is a pseudo simple.

Now, assume $\alpha\geq3$ then $\operatorname{ann}(\overline{p})=p^{\alpha
-1}{\mathbb{Z}}$ is not maximal since $\alpha\geq3$. Thus, ${\mathbb{Z}}%
$-module ${\mathbb{Z}}_{p^{\alpha}}$ is not pseudo simple if $\alpha\geq3$.
\end{proof}

\begin{theorem}
\label{homo} Let $\phi:M\rightarrow N$ be an $R$-module homomorphism. Then the
following statements hold.

(i) If $M$ is a pseudo simple $R$-module and $\phi$ is a surjective, then $N$
is a pseudo simple $R$-module.

(ii)\ If $N$ is a pseudo simple $R$-module and $\phi$ is an injective, then
$M$ is a pseudo simple $R$-module.
\end{theorem}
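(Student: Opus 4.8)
The plan is to reduce everything to the annihilator characterization of pseudo simplicity established in Lemma \ref{LemmaPseu}: an $R$-module $L$ is pseudo simple if and only if $\operatorname{ann}(\ell)$ is a maximal ideal of $R$ for every $\ell\in L$ with $\ell\neq 0$ and $R\ell\neq L$. The one observation driving both parts is that a homomorphism $\phi$ is compatible with annihilators, namely $\operatorname{ann}(m)\subseteq\operatorname{ann}(\phi(m))$ always, with equality as soon as $\phi$ is injective. The real content of each direction is then to check that the element under consideration still lies in the nongenerator range, so that the pseudo simple hypothesis actually applies to it.

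For part (i), I would fix an arbitrary $n\in N$ with $n\neq 0$ and $Rn\neq N$ and lift it along the surjection, choosing $m\in M$ with $\phi(m)=n$. First I verify $m\in M^{\#}$: clearly $m\neq 0$ since $n\neq 0$, and if we had $Rm=M$ then $Rn=\phi(Rm)=\phi(M)=N$, contradicting $Rn\neq N$; hence $Rm\neq M$. By hypothesis $\operatorname{ann}(m)$ is then maximal, and since $\operatorname{ann}(m)\subseteq\operatorname{ann}(n)\subsetneq R$ (the proper inclusion because $n\neq 0$), maximality forces $\operatorname{ann}(n)=\operatorname{ann}(m)$, which is maximal. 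As $n$ was arbitrary, $N$ is pseudo simple. Equivalently, $\phi$ restricts to a surjection $Rm\to Rn$ of the simple module $Rm$ onto the nonzero module $Rn$, so $Rn$ is simple.

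For part (ii), I would take $m\in M$ with $m\neq 0$ and $Rm\neq M$ and set $n=\phi(m)$; injectivity gives $n\neq 0$ and $\operatorname{ann}(m)=\operatorname{ann}(n)$. The delicate step is to establish $Rn\neq N$ before the pseudo simplicity of $N$ can be used. I would argue by contradiction: if $Rn=N$, then $\phi(Rm)=Rn=N$ shows $\phi$ is surjective, hence bijective, and then $\phi(Rm)=N=\phi(M)$ together with injectivity yields $Rm=M$, contradicting $Rm\neq M$. Thus $Rn\neq N$, so $\operatorname{ann}(n)$ is maximal by hypothesis, and therefore $\operatorname{ann}(m)=\operatorname{ann}(n)$ is maximal, making $M$ pseudo simple.

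The main obstacle is purely the bookkeeping about generators: because the pseudo simple hypothesis says nothing about generators, the substance of the argument is confirming that transporting an element across $\phi$ preserves its nongenerator status, that is $Rm\neq M$ in part (i) and $Rn\neq N$ in part (ii). The generator-versus-nongenerator dichotomy in part (ii) is exactly where injectivity of $\phi$ does essential work, and it is the only point in the proof that is not a one-line verification.
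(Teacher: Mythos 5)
Your proposal is correct and follows essentially the same route as the paper: reduce to the annihilator criterion of Lemma \ref{LemmaPseu}, use $\operatorname{ann}(m)\subseteq\operatorname{ann}(\phi(m))$ with equality under injectivity, and check that nongenerator status transfers across $\phi$. The only difference is that you spell out the two generator-transfer claims that the paper merely asserts, which is a welcome addition rather than a deviation.
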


\begin{proof}
$(i):$\ Let $0\neq n\in N$ be a nongenerator element such that $\phi(m)=n$ for
some $0\neq m\in M$. Note that $m$ cannot be a generator and
$\operatorname{ann}(m)\subseteq\operatorname{ann}(\phi(m))=\operatorname{ann}%
(n)$. Hence, $\operatorname{ann}(n)$ is a maximal ideal.

$(ii):$\ Let $0\neq m\in M$ be a nongenerator element then $\phi(m)$ can not
be a generator as well. It follows that $\operatorname{ann}%
(m)=\operatorname{ann}(\phi(m))$. Thus, $M$ is a pseudo simple module.
\end{proof}

The following results are immediate consequences of Theorem \ref{homo}.

\begin{corollary}
\label{cfac}(i)\ If $M$ is a pseudo simple $R$-module then each submodule of
$M$ is a pseudo simple $R$-module.

(ii)\ If $M$ is a pseudo simple module then each factor module of $M$ by a
submodule is a pseudo simple $R$-module.

(iii)\ Let $\{M_{i}\}_{i\in I}$ be a set of all $R$-modules $M_{i}$ such that
$\bigoplus_{i}M_{i}$ or $\prod_{i}M_{i}$ is a pseudo simple module. Then, each
$M_{i}$ is a pseudo simple module $R$-module.
\end{corollary}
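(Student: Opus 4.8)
The plan is to obtain all three parts as direct applications of Theorem \ref{homo} to the canonical structural maps, matching each situation to whichever of the two implications (surjective or injective) applies. No new module-theoretic input is needed beyond recognizing that inclusions are injective and projections are surjective.

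For part (i), let $L$ be a submodule of the pseudo simple module $M$, and consider the inclusion map $\iota\colon L\rightarrow M$, which is injective. Since $M$ is pseudo simple, Theorem \ref{homo}(ii) (read with the target module being $M$ and the source being $L$) immediately yields that $L$ is pseudo simple. For part (ii), let $N$ be a submodule of $M$ and consider the canonical projection $\pi\colon M\rightarrow M/N$, which is surjective. As $M$ is pseudo simple, Theorem \ref{homo}(i) gives that the quotient $M/N$ is pseudo simple.

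For part (iii), I would fix an index $j\in I$ and use the canonical coordinate projection $\pi_{j}\colon \prod_{i}M_{i}\rightarrow M_{j}$ (respectively $\pi_{j}\colon \bigoplus_{i}M_{i}\rightarrow M_{j}$), which is surjective in both the product and the direct sum case. By hypothesis the relevant total module is pseudo simple, so Theorem \ref{homo}(i) forces $M_{j}$ to be pseudo simple; since $j$ was arbitrary, every $M_{i}$ is pseudo simple. The one point deserving a remark is that a single argument via the projection covers both $\prod_{i}M_{i}$ and $\bigoplus_{i}M_{i}$ uniformly, so there is no need to treat the two constructions separately.

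The argument carries essentially no obstacle; the only thing to be careful about is orienting each canonical map correctly so that the appropriate clause of Theorem \ref{homo} (injective versus surjective) applies. Once the inclusion is paired with the injective clause and each projection with the surjective clause, all three assertions follow at once.
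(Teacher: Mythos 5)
Your proposal is correct and matches the paper's approach: the authors likewise present Corollary \ref{cfac} as an immediate consequence of Theorem \ref{homo}, with the inclusion of a submodule handled by the injective clause and the canonical quotient and coordinate projections handled by the surjective clause. Your explicit pairing of each canonical map with the appropriate clause is exactly the intended (unwritten) argument.
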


\begin{theorem}
Let $M$ be an $R$-module with a submodule $N$. Then, $M/N$ is a pseudo simple
$R$-module if and only if $(N:m)$ is a maximal ideal for each $0\neq m\in M$
with $R(m+N)\neq M/N$.
\end{theorem}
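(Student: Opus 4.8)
The plan is to reduce everything to Lemma \ref{LemmaPseu} applied to the quotient module $M/N$, with the only real content being the identification of the relevant annihilators with the given colon ideals. The crucial observation is that for any $m \in M$, the annihilator of the coset $m+N$ in the module $M/N$ is exactly the colon ideal $(N:m)$. Indeed, $r(m+N)=\overline{0}$ in $M/N$ holds if and only if $rm \in N$, which says precisely $r \in (N:m)$; hence $\operatorname{ann}(m+N)=(N:m)$ as ideals of $R$. I would record this identity first, since it is the bridge between the intrinsic module-theoretic condition (pseudo simplicity of $M/N$) and the external condition stated in terms of $N$ and $M$.

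Next I would match up the quantifiers. The nonzero nongenerators of $M/N$ are exactly the cosets $m+N$ with $m \notin N$ (so that $m+N\neq \overline{0}$) and $R(m+N)\neq M/N$; these are the elements ranged over in the hypothesis ``$0\neq m\in M$ with $R(m+N)\neq M/N$'' once ``$0\neq m$'' is read in $M/N$ as $m+N\neq\overline 0$. With this dictionary in hand, Lemma \ref{LemmaPseu} applied to the $R$-module $M/N$ states that $M/N$ is pseudo simple if and only if $\operatorname{ann}(m+N)$ is a maximal ideal for every such coset. Substituting the identity $\operatorname{ann}(m+N)=(N:m)$ from the previous step converts this directly into the assertion that $(N:m)$ is maximal for each relevant $m$, which is exactly the claimed equivalence. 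Both directions of the biconditional come out simultaneously, since Lemma \ref{LemmaPseu} is itself an ``if and only if''.

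I do not anticipate a genuine obstacle here; the theorem is essentially a translation of Lemma \ref{LemmaPseu} through the canonical surjection $M \to M/N$. The only point requiring mild care is the bookkeeping around the nonzero/nongenerator conditions: one must verify that a coset $m+N$ is a nonzero nongenerator of $M/N$ precisely under the stated hypotheses on $m$, so that the family of maximality conditions indexed by nonzero nongenerators of $M/N$ coincides with the family indexed by $m\in M$ with $R(m+N)\neq M/N$. Once that correspondence is spelled out, the proof is a one-line invocation of Lemma \ref{LemmaPseu} together with the annihilator identity.
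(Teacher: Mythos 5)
Your proof is correct and takes essentially the same route as the paper: the whole content is the identity $\operatorname{ann}(m+N)=(N:m)$ combined with the maximal-annihilator characterization of pseudo simplicity, together with the (necessary) reading of ``$0\neq m$'' as $m+N\neq\overline{0}$ in $M/N$. The only cosmetic difference is that the paper re-derives the converse direction of Lemma \ref{LemmaPseu} inside $M/N$ (showing every nonzero cyclic submodule of $R(m+N)$ equals $R(m+N)$) instead of simply citing the lemma as you do, so your version is marginally more economical.
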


\begin{proof}
The first direction follows from the fact that $\operatorname{ann}%
(m+N)=(N:m)$. Conversely, let $\operatorname{ann}(m+N)$ be a maximal ideal for
each $0\neq m\in M$ with $R(m+N)\neq M/N$. Take a nonzero cyclic submodule of
$R(m+N)$, say $M^{\prime}=R(am+N)\subseteq R(m+N)$. Since $M^{\prime}$ is
nonzero then $am\not \in N$ so $a\not \in (N:m)$. Hence, $1-ra\in(N:m)$ for
some $r\in R$. Moreover, $m-ram\in N$. Thus, $M^{\prime}=R(am+N)=R(m+N)$. That
is, $R(m+N)$ is simple.
\end{proof}

\begin{theorem}
\label{tquotient}Suppose $M$ is an $R$-module with a multiplicatively closed
subset $S$ of $R$. If $M$ is a pseudo simple $R$-module with
$\operatorname{ann}(m)\cap S=\emptyset$ for all $m\in M^{\#}$ then $S^{-1}M$
is a pseudo simple $S^{-1}R$ module.
\end{theorem}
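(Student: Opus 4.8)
The plan is to check the annihilator criterion of Lemma~\ref{LemmaPseu}, now applied to the $S^{-1}R$-module $S^{-1}M$: I must show that $\operatorname{ann}_{S^{-1}R}(x)$ is a maximal ideal of $S^{-1}R$ for every nonzero nongenerator $x \in S^{-1}M$ with $S^{-1}R\,x \neq S^{-1}M$. Writing $x = m/s$ with $m\in M$ and $s\in S$, I first observe that $1/s$ is a unit of $S^{-1}R$, so replacing $x$ by $m/1 = s\cdot x$ changes neither the cyclic submodule it generates nor its annihilator; hence it suffices to treat $x = m/1$.

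Next I would pin down that $m$ lies in $M^{\#}$. Since $m/1 \neq 0$ we have $m \neq 0$ in $M$. Moreover, if $Rm = M$ then $m/1$ would generate $S^{-1}M$ (every $m'/s'$ equals $(r/s')(m/1)$ once $m' = rm$), contradicting $S^{-1}R\,(m/1) \neq S^{-1}M$; so $Rm \neq M$ and therefore $m\in M^{\#}$. Now the hypotheses apply: $M$ pseudo simple gives, via Lemma~\ref{LemmaPseu}, that $\operatorname{ann}_R(m)$ is a maximal ideal of $R$, and the standing assumption yields $\operatorname{ann}_R(m)\cap S = \emptyset$.

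The two remaining ingredients are a computation and a general localization fact. First, directly from the definition of vanishing in a localized module, $a/t$ annihilates $m/1$ iff $uam = 0$ for some $u\in S$, i.e. iff $ua\in\operatorname{ann}_R(m)$ for some $u\in S$; unwinding this gives the identity $\operatorname{ann}_{S^{-1}R}(m/1) = S^{-1}\operatorname{ann}_R(m)$. Second, I would prove that the extension of a maximal ideal $\mathfrak m$ of $R$ disjoint from $S$ is again maximal in $S^{-1}R$. The cleanest route is the isomorphism $S^{-1}R / S^{-1}\mathfrak m \cong \overline{S}^{\,-1}(R/\mathfrak m)$, where $\overline{S}$ is the image of $S$ in the field $R/\mathfrak m$; because $\mathfrak m\cap S=\emptyset$ we have $0\notin\overline{S}$, so $\overline{S}$ consists of units of the field $R/\mathfrak m$ and the localization leaves it unchanged, giving $S^{-1}R/S^{-1}\mathfrak m \cong R/\mathfrak m$, a field. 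Hence $S^{-1}\mathfrak m$ is maximal. Applying this with $\mathfrak m = \operatorname{ann}_R(m)$ shows $\operatorname{ann}_{S^{-1}R}(m/1)$ is maximal, and Lemma~\ref{LemmaPseu} then certifies that $S^{-1}M$ is pseudo simple.

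I expect the main obstacle to be the preservation of maximality under localization: passing to $S^{-1}R$ does not in general preserve dimension-theoretic data, and one must use that a maximal ideal is automatically maximal among the primes disjoint from $S$ (equivalently, the quotient-by-localization argument above). A secondary point requiring care is that the hypothesis $\operatorname{ann}_R(m)\cap S=\emptyset$ is exactly what guarantees both that $m/1$ is a genuine nonzero element and that $S^{-1}\operatorname{ann}_R(m)$ is a proper, hence maximal, ideal rather than all of $S^{-1}R$.
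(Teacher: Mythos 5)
Your proof is correct and follows essentially the same route as the paper's: pass to the representative $m$, note that $m$ lies in $M^{\#}$, invoke Lemma \ref{LemmaPseu} to get that $\operatorname{ann}_R(m)$ is maximal, and transport maximality through the identification $\operatorname{ann}_{S^{-1}R}(m/s)=S^{-1}\bigl(\operatorname{ann}_R(m)\bigr)$. The only difference is that you supply the justifications (that $m\in M^{\#}$, and that localizing at an $S$ disjoint from a maximal ideal $\mathfrak{m}$ keeps $S^{-1}\mathfrak{m}$ maximal) which the paper's terser proof asserts without detail.
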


\begin{proof}
Let $\frac{m}{s}$ be a nonzero and nongenerator element of $S^{-1}M$. Note
that $\operatorname{ann}(m)\cap S=\emptyset$ and $0\neq m\in M$ is
nongenerator as well. Since $M$ is a pseudo simple then $\operatorname{ann}%
(m)$ is a maximal ideal for each nonzero nongenerator element $m$ of $M$.
Moreover, $S^{-1}(\operatorname{ann}_{R}(m))=\operatorname{ann}_{S^{-1}%
R}(\frac{m}{s})$ is a maximal ideal. Hence, $S^{-1}M$ is a pseudo simple.
\end{proof}

\begin{theorem}
\label{tdir}Let $M_{1}$ and $M_{2}$ be $R$-modules such that at least one of
them is a noncyclic module. Then $M=M_{1}\oplus M_{2}$ is a pseudo simple
$R$-module if and only if $M_{1}$ and $M_{2}$ are pseudo simple $R$-modules
and $\operatorname{ann}(M_{1})=\operatorname{ann}(M_{2})$ is a maximal ideal
of $R$.
\end{theorem}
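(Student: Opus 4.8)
The plan is to translate the whole statement into the language of annihilators via Lemma \ref{LemmaPseu}, and then exploit the elementary fact that the intersection of two \emph{distinct} maximal ideals of $R$ is never maximal. Throughout I would use that $\operatorname{ann}((m_1,m_2))=\operatorname{ann}(m_1)\cap\operatorname{ann}(m_2)$, which is immediate from $r(m_1,m_2)=(rm_1,rm_2)$. I first record the reduction that $M=M_1\oplus M_2$ is noncyclic: if, say, $M_1$ is noncyclic and $M$ were generated by $(a,b)$, then the projection $\pi_1\colon M\to M_1$ would force $M_1=R\pi_1(a,b)=Ra$ to be cyclic, a contradiction. Hence every nonzero element of $M$ is a nongenerator, which is exactly what lets me apply pseudo simplicity of $M$ to \emph{all} nonzero elements.

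For the forward direction, assume $M$ is pseudo simple. The assertion that $M_1$ and $M_2$ are pseudo simple is precisely Corollary \ref{cfac}(iii), so it remains only to identify the annihilators. Fix nonzero $m_1\in M_1$ and $m_2\in M_2$. Since $(m_1,0)$, $(0,m_2)$ and $(m_1,m_2)$ are all nongenerators of the noncyclic module $M$, Lemma \ref{LemmaPseu} makes $\operatorname{ann}(m_1)$, $\operatorname{ann}(m_2)$ and $\operatorname{ann}(m_1)\cap\operatorname{ann}(m_2)$ simultaneously maximal. But two maximal ideals whose intersection is again maximal must coincide, so $\operatorname{ann}(m_1)=\operatorname{ann}(m_2)$. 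As this holds for every choice of nonzero $m_1,m_2$, all these annihilators equal a single fixed maximal ideal $\mathfrak m$; intersecting over all nonzero elements of each summand then yields $\operatorname{ann}(M_1)=\operatorname{ann}(M_2)=\mathfrak m$, as required.

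For the converse, suppose $M_1,M_2$ are pseudo simple with $\operatorname{ann}(M_1)=\operatorname{ann}(M_2)=\mathfrak m$ maximal. The key sublemma I would prove is that \emph{every} nonzero element $x$ of $M_i$ has $\operatorname{ann}(x)=\mathfrak m$: the chain $\mathfrak m=\operatorname{ann}(M_i)\subseteq\operatorname{ann}(x)\subsetneq R$ together with the maximality of $\mathfrak m$ forces equality. (When $M_i$ is noncyclic this also follows from pseudo simplicity directly; when $M_i$ is cyclic it is simple with $M_i\cong R/\mathfrak m$, and the claim is the field computation in $R/\mathfrak m$.) Now take any nonzero $(m_1,m_2)\in M$: if both coordinates are nonzero then $\operatorname{ann}((m_1,m_2))=\mathfrak m\cap\mathfrak m=\mathfrak m$, while if one coordinate vanishes the intersection with $\operatorname{ann}(0)=R$ again gives $\mathfrak m$. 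In every case $\operatorname{ann}((m_1,m_2))=\mathfrak m$ is maximal, so $M$ is pseudo simple by Lemma \ref{LemmaPseu}.

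I expect the only genuine subtlety to be the step in the forward direction that rules out distinct maximal annihilators, since this is where the hypothesis that one summand is noncyclic is really used: it guarantees that $M$ is noncyclic, so that the auxiliary elements $(m_1,0)$ and $(0,m_2)$ are legitimately nongenerators to which Lemma \ref{LemmaPseu} applies. Without this hypothesis the conclusion genuinely fails — for instance $\mathbb{Z}_6\cong\mathbb{Z}_2\oplus\mathbb{Z}_3$ is pseudo simple even though $\operatorname{ann}(\mathbb{Z}_2)=2\mathbb{Z}\neq 3\mathbb{Z}=\operatorname{ann}(\mathbb{Z}_3)$ — so I would keep the noncyclicity reduction explicit at the very start of the argument.
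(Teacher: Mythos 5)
Your proposal is correct and follows essentially the same route as the paper: both directions reduce to annihilators via Lemma \ref{LemmaPseu}, the forward direction extracts $\operatorname{ann}(m_1)=\operatorname{ann}(m_2)$ from the maximality of $\operatorname{ann}(m_1)\cap\operatorname{ann}(m_2)$ for the nongenerator $(m_1,m_2)$, and the converse rests on the observation that $\operatorname{ann}(M)$ maximal forces $\operatorname{ann}(x)$ maximal for every nonzero $x$. You are somewhat more explicit than the paper about two points it leaves implicit --- that noncyclicity of one summand makes $M$ itself noncyclic (so every nonzero element is a legitimate nongenerator), and the field computation behind the converse --- but these are elaborations, not a different argument.
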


\begin{proof}
$\Rightarrow:$ It remains to show that $\operatorname{ann}(M_{1}%
)=\operatorname{ann}(M_{2})$ is a maximal ideal of $R$ by Corollary \ref{cfac} (iii).
Let $0\neq m\in M_{1}$ be a fixed element and choose an arbitrary element
$0\neq m^{\prime}\in M_{2}$, then $(0,0)\neq(m,m^{\prime})$ is not a generator
of $M$. It follows that $\operatorname{ann}((m,m^{\prime}))=\operatorname{ann}%
(m)\cap\operatorname{ann}(m^{\prime})$ is a maximal ideal of $R$. Hence we
have $\operatorname{ann}(m)=\operatorname{ann}(m)\cap\operatorname{ann}%
(m^{\prime})$ as $\operatorname{ann}(m)\cap\operatorname{ann}(m^{\prime
})\subseteq\operatorname{ann}(m)$, and this shows that $\operatorname{ann}%
(m)=\operatorname{ann}(m^{\prime})$ is a maximal ideal of $R$. Moreover, we
conclude that $\operatorname{ann}(m)=\cap_{m^{\prime}\in M_{2}}%
\operatorname{ann}(m^{\prime})=\operatorname{ann}(M_{2})$ since $m^{\prime}\in
M_{2}$ is arbitrary. By replacing $m\in M_{1}$ with arbitrary element
$m^{\star}\in M_{1},\ $we obtain $\operatorname{ann}(M_{1})=\operatorname{ann}%
(M_{2}).\ $

$\Leftarrow:$ Suppose that $\ann(M_1)=\ann(M_2)$ is a maximal ideal. Then $\operatorname{ann}(M)=\operatorname{ann}%
(M_{1})\cap\operatorname{ann}(M_{2})=\operatorname{ann}(M_{1})$ is a maximal
ideal. This gives $M=M_{1}\oplus M_{2}$ is a pseudo simple module.
\end{proof}

\begin{theorem}
\label{tdir2}Let $M_{1}$ and $M_{2}$ be two cyclic $R$-modules and
$M=M_{1}\oplus M_{2}$. Then $M$ is pseudo simple $R$-module if and only if
$M_{1}$ and $M_{2}$ are simple modules and one of the following conditions hold:

(i)$\ \operatorname{ann}(M_{1})\neq\operatorname{ann}(M_{2})$ are maximal
ideals of $R$ and $(m_{1},m_{2})$ is a generator of $M$ for all nonzero
$m_{1}$ and $m_{2}$.

(ii)$\ \operatorname{ann}(M_{1})=\operatorname{ann}(M_{2})$ is a maximal ideal
of $R$.
\end{theorem}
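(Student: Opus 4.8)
The plan is to prove both implications by reducing, via Lemma \ref{LemmaPseu}, the statement ``$M$ is pseudo simple'' to the condition that every nonzero nongenerator of $M$ has a maximal annihilator. For the forward direction, write $M_1 = Rm_1$ and $M_2 = Rm_2$ with $m_1, m_2 \neq 0$. First I would upgrade ``pseudo simple'' to ``simple'' on each factor: the element $(m_1,0)$ is nonzero and satisfies $R(m_1,0) = M_1 \oplus 0 \neq M$, so it is a nongenerator of the pseudo simple module $M$; hence $R(m_1,0) \cong M_1$ is simple, and symmetrically $M_2$ is simple. Consequently $\ann(M_1)$ and $\ann(M_2)$ are maximal ideals. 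I would then split according to whether these two maximal ideals coincide. If $\ann(M_1) = \ann(M_2)$, we are in case (ii) and nothing further is needed. If $\ann(M_1) = \mathfrak{m}_1 \neq \mathfrak{m}_2 = \ann(M_2)$, I must verify the generator clause of (i): given nonzero $m_1 \in M_1$ and $m_2 \in M_2$, comaximality of $\mathfrak{m}_1$ and $\mathfrak{m}_2$ yields $r,s \in R$ with $r \equiv 1 \pmod{\mathfrak{m}_1}$, $r \in \mathfrak{m}_2$, and $s \in \mathfrak{m}_1$, $s \equiv 1 \pmod{\mathfrak{m}_2}$. Since in a simple module every nonzero element has annihilator equal to the annihilator of the module, this forces $r(m_1,m_2) = (m_1,0)$ and $s(m_1,m_2) = (0,m_2)$, whence $R(m_1,m_2) \supseteq R(m_1,0) + R(0,m_2) = M$, so $(m_1,m_2)$ is a generator.

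For the converse I would assume $M_1, M_2$ simple together with one of (i), (ii), and check directly that each nonzero nongenerator of $M$ has a maximal annihilator before invoking Lemma \ref{LemmaPseu}. Under (ii), $M \cong (R/\mathfrak{m})^2$ is a two-dimensional space over the field $R/\mathfrak{m}$, so every nonzero element is a nongenerator with $\ann((x,y)) = \ann(x) \cap \ann(y) = \mathfrak{m}$. Under (i), the generator clause guarantees that a nonzero nongenerator $(x,y)$ must have a zero coordinate, say $(x,0)$ with $x \neq 0$, so that $\ann((x,0)) = \ann(x) = \mathfrak{m}_1$ is maximal (and symmetrically $\mathfrak{m}_2$ in the other case). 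In both cases the hypothesis of Lemma \ref{LemmaPseu} is satisfied.

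The step I expect to be the crux is the distinct-annihilator subcase of the forward direction: the real content there is the Chinese-Remainder/comaximality computation that recovers each ``axis'' element $(m_1,0)$ and $(0,m_2)$ inside the cyclic submodule generated by $(m_1,m_2)$. Everything else is a routine consequence of simplicity, namely that every nonzero element of a simple module generates it and has annihilator the maximal ideal $\ann(M_i)$, combined with the formula $\ann((x,y)) = \ann(x) \cap \ann(y)$.
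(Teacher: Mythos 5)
Your proof is correct, and while it follows the same overall skeleton as the paper's (reduce to annihilators via Lemma \ref{LemmaPseu}, establish simplicity of each factor from the axis elements $(m_1,0)$ and $(0,m_2)$, then split on whether $\ann(M_1)=\ann(M_2)$), you handle the one nontrivial step --- the generator clause in case (i) --- by a genuinely different argument. The paper proves that clause by contradiction from pseudo-simplicity: if some $(m_1,m_2)$ with both coordinates nonzero were a nongenerator, then $\ann((m_1,m_2))=\ann(M_1)\cap\ann(M_2)$ would be a maximal ideal contained in each of the two distinct maximal ideals $\ann(M_1)$ and $\ann(M_2)$, forcing them to be equal. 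You instead prove the clause directly by comaximality: writing $1=a+b$ with $a\in\ann(M_1)$, $b\in\ann(M_2)$, you recover $(m_1,0)=b(m_1,m_2)$ and $(0,m_2)=a(m_1,m_2)$ inside $R(m_1,m_2)$, so that $R(m_1,m_2)=M$. Your route uses only the simplicity of $M_1$ and $M_2$ and the distinctness of their annihilators --- not the pseudo-simplicity of $M$ --- and therefore shows something the paper's argument does not make visible: the generator condition in (i) is automatic once $M_1$ and $M_2$ are simple with distinct annihilators, so it is redundant as a hypothesis in that case. The paper's contradiction argument is marginally shorter; yours is more informative. Your converse direction matches the paper's essentially verbatim.
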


\begin{proof}
Suppose $M$ is a pseudo simple $R$-module with $M_{1}=Rx$ and $M_{2}=Ry$ for
some $x\in M_{1}$ and $y\in M_{2}$. We will show that $M_{1}$ is a simple
$R$-module. Since $(x,0)$ is non-generator of $M$ and $M$ is a pseudo simple,
we have $\operatorname{ann}(x,0)=\operatorname{ann}(x)=\operatorname{ann}%
(M_{1})$ is a maximal ideal. This gives $M_{1}$ is a simple module as
$M_{1}\cong R/\operatorname{ann}(x)$. Similarly, one can show that $M_{2}$ is
a simple module. Assume that $\operatorname{ann}(M_{1})\neq\operatorname{ann}%
(M_{2})$. Now, we need to show that $(m_{1},m_{2})$ is a generator of $M$ for
all nonzero $m_{1}$ and $m_{2}$. Suppose to the contrary that $(m_{1},m_{2})$
is not a generator of $M$ for some nonzero $m_{1}$ and $m_{2}$. Since $M$ is a
pseudo simple, $\operatorname{ann}((m_{1},m_{2}))=\operatorname{ann}%
(m_{1})\cap\operatorname{ann}(m_{2})=\operatorname{ann}(M_{1})\cap
\operatorname{ann}(M_{2})$ is a maximal ideal. Then, $\operatorname{ann}%
(M_{1})\cap\operatorname{ann}(M_{2})\subseteq\operatorname{ann}(M_{1}%
),\operatorname{ann}(M_{2})$. Hence, $\operatorname{ann}(M_{1}%
)=\operatorname{ann}(M_{2})$ as $\operatorname{ann}(M_{1})$ is maximal ideal
which completes the proof.

For the only if part, if $(ii)$ holds, then note that $\operatorname{ann}%
(M)=\operatorname{ann}(M_{1})\cap\operatorname{ann}(M_{2})=\operatorname{ann}%
(M_{1})$ is a maximal ideal of $R$. This ensures that $M$ is a pseudo simple
module. Now, assume that $M_{1}$ and $M_{2}$ are two simple modules with
different maximal ideals $\operatorname{ann}(M_{1})$ and $\operatorname{ann}%
(M_{2})$ and also $(m_{1},m_{2})$ is a generator of $M$ for all nonzero
$m_{1}$ and $m_{2}$. Assume moreover that $(m,m^{\prime})\in M$ is a nonzero
nongenerator then by assumption we have either $m=0$ or $m^{\prime}=0$. One
may assume that $m^{\prime}=0$. It follows that $\operatorname{ann}%
((m,m^{\prime}))=\operatorname{ann}(m)=\operatorname{ann}(M_{1})$ is a maximal
ideal. Hence $M$ is a pseudo simple $R$-module since $M_{1}$ is a simple
module and $m\neq0$.
\end{proof}

Let $M$ be an $R$-module. The trivial extesion $R\ltimes M=R\oplus M=\left\{
(r,m):r\in R,m\in M\right\}  $ is a commutative ring with componentwise
addition and the multiplication defined by $(a,m)(b,n)=(ab,an+bm)$ for every
$a,b\in R$ and $m,n\in M$\ \cite{Anderson}.

\begin{theorem}
\label{ttri}Let $M$ be an $R$-module. Then $R\ltimes M$ is a pseudo simple ring
if and only if $R$ is a local ring with a unique maximal ideal
$\operatorname{ann}(M)$ and $\operatorname{ann}(r)=\operatorname{ann}(M)$ for
all nonzero nonunit elements $r$ of $R$.
\end{theorem}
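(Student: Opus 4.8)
The plan is to work throughout inside the ring $T = R\ltimes M$ and to translate pseudo-simplicity into a statement about annihilators via Lemma \ref{LemmaPseu}: $T$ is a pseudo simple ring if and only if $\operatorname{ann}_T((a,m))$ is a maximal ideal of $T$ for every nonzero nongenerator $(a,m)$. So the first preliminary step is to identify the nongenerators of $T$. Since the projection $T \to R$, $(a,m)\mapsto a$, is a ring homomorphism, and since $(a,m)(b,n)=(ab,an+bm)$, a short computation shows that $(a,m)$ is a unit of $T$ (equivalently a generator of $T$ as a module over itself) if and only if $a$ is a unit of $R$; thus the nonzero nongenerators of $T$ are exactly the pairs $(a,m)\neq(0,0)$ with $a$ a nonunit of $R$. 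I will also record the elementary fact that $0\oplus M$ is a square-zero ideal of $T$, hence contained in every prime, and in particular in every maximal, ideal of $T$.

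For the forward direction, assume $T$ is pseudo simple. The idea is to feed the two families of test elements $(r,0)$ and $(0,m)$ into the annihilator criterion. For a nonzero nonunit $r\in R$ the element $(r,0)$ is a nonzero nongenerator, so $\operatorname{ann}_T((r,0))$ is maximal; since this annihilator must contain $0\oplus M$, one gets $Mr=0$, i.e. $r\in\operatorname{ann}(M)$, and then $\operatorname{ann}_T((r,0))=\operatorname{ann}_R(r)\oplus M$, whence $T/\operatorname{ann}_T((r,0))\cong R/\operatorname{ann}_R(r)$ forces $\operatorname{ann}_R(r)$ to be maximal in $R$. Combining these, every nonzero nonunit of $R$ lies in $\operatorname{ann}(M)$; conversely every element of $\operatorname{ann}(M)$ is a nonunit because $M\neq0$. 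Hence the set of nonunits of $R$ coincides with the ideal $\operatorname{ann}(M)$, so $R$ is local with unique maximal ideal $\operatorname{ann}(M)$, and the maximal ideal $\operatorname{ann}_R(r)$ is then forced to equal $\operatorname{ann}(M)$ for every nonzero nonunit $r$.

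For the converse, assume $R$ is local with $\mathfrak m=\operatorname{ann}(M)$ and $\operatorname{ann}(r)=\mathfrak m$ for all nonzero nonunits $r$; note this yields $\mathfrak m M=0$ and $\mathfrak m^2=0$. Given any nonzero nongenerator $(a,m)$ (so $a\in\mathfrak m$), I will show $\operatorname{ann}_T((a,m))=\mathfrak m\oplus M$. The inclusion $\supseteq$ is a direct computation using $\mathfrak m^2=0$ and $\mathfrak m M=0$; for $\subseteq$, if $(b,n)(a,m)=(ba,bm+na)=(0,0)$ with $b$ a unit, then $ba=0$ gives $a=0$ and $bm=0$ gives $m=0$, contradicting $(a,m)\neq(0,0)$, so $b\in\mathfrak m$. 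Since $T/(\mathfrak m\oplus M)\cong R/\mathfrak m$ is a field, $\mathfrak m\oplus M$ is maximal, and Lemma \ref{LemmaPseu} then gives that $T$ is pseudo simple.

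The main obstacle is the forward direction, specifically extracting the precise algebraic conditions from maximality of the annihilators: one must choose the right test elements and, crucially, use that $0\oplus M$ sits inside every maximal ideal of $T$ to pin down $r\in\operatorname{ann}(M)$, and then argue that the nonunits form an ideal (namely $\operatorname{ann}(M)$) in order to conclude that $R$ is local. The degenerate field case, where there are no nonzero nonunits, is handled uniformly by this last argument, since then $\operatorname{ann}(M)=0$ and $R$ is a field with maximal ideal $(0)$.
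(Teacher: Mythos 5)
Your proof is correct and follows essentially the same route as the paper's: both directions test pseudo-simplicity on the elements $(r,0)$ via Lemma \ref{LemmaPseu} and compute annihilators inside $R\ltimes M$. The only differences are cosmetic --- you derive the shape of the maximal annihilators directly from the observation that $0\oplus M$ is a square-zero ideal rather than citing the Anderson--Winders description of maximal ideals of $R\ltimes M$, you treat the converse uniformly instead of splitting into the cases $a=0$ and $a\neq 0$, and you handle the degenerate case where $R$ is a field explicitly, which the paper's proof passes over.
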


\begin{proof}
$\Rightarrow:$ Let $R\ltimes M$ be a pseudo simple ring. We may assume that
$R$ is not a field. Now, take a nonzero nonunit element $a$ of $R$, then
$(0,0)\neq(a,0)$ is a nongenerator of $R\ltimes M$. Since $R\ltimes M$ is a
pseudo simple ring, we have $\operatorname{ann}((a,0))=\operatorname{ann}%
_{R}(a)\ltimes\operatorname{ann}_{M}(a)$ is a maximal ideal of $R\ltimes M$.
Then by \cite[Theorem 3.2]{Anderson}, $\operatorname{ann}_{M}(a)=M$ and
$\operatorname{ann}_{R}(a)$ is a maximal ideal of $R$. This gives
$a\in\operatorname{ann}(M)$ which implies that $\operatorname{ann}(M)$ is the
unique maximal ideal of $R$. Also note that $\operatorname{ann}%
(a)=\operatorname{ann}(M)$.

$\Leftarrow:$ Let $(0,0)\neq(a,m)$ be a nongenerator of $R\ltimes M$. Assume
that $a=0$ then this gives $\operatorname{ann}((0,m))=\operatorname{ann}%
(m)\ltimes M=\operatorname{ann}(M)\ltimes M$ since $\operatorname{ann}(M)$ is
the unique maximal ideal and $\operatorname{ann}(M)\subseteq\operatorname{ann}%
(m)$. Then by \cite[Theorem 3.2]{Anderson}, $\operatorname{ann}%
((0,m))=\operatorname{ann}(M)\ltimes M$ is a maximal ideal. Now, we assume
that $a\neq0$ then this gives $a$ is a nonunit since $(a,m)$ is a nongenerator
element of $R\ltimes M$. Then, $a\in\operatorname{ann}(M)$ which implies that
$\operatorname{ann}_{M}(a)=M$. It follows from the assumption that
$\operatorname{ann}((a,m))=\operatorname{ann}(a)\cap\operatorname{ann}%
(m)\ltimes\operatorname{ann}_{M}(a)=\operatorname{ann}(M)\ltimes M$. Thus,
$\operatorname{ann}((a,m))=\operatorname{ann}(M)\ltimes M$ is a maximal ideal
of $R\ltimes M$.
\end{proof}

\begin{theorem}
\label{T_1-p.simple} Let $M\ $be a module over an integral domain $R.\ $Then
$D(M)$ is a $T_{1}$-space if and only if $M$ is a pseudo simple $R$-module.
\end{theorem}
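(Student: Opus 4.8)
The plan is to characterize the $T_{1}$ property through closures of points and then translate the resulting divisibility condition into simplicity of cyclic submodules. I would start from the standard fact that a topological space is a $T_{1}$-space if and only if every singleton is closed; hence $D(M)$ is $T_{1}$ exactly when $\overline{\{[m]\}}=\{[m]\}$ for every $m\in M^{\#}$. By Proposition \ref{closure}, $\overline{\{[m]\}}=\{[n]\in\operatorname{EC}(M^{\#}):m\mid n\}$, so the $T_{1}$ condition becomes the purely algebraic statement: for every $m\in M^{\#}$, whenever $n\in M^{\#}$ satisfies $m\mid n$ we must have $n\mid m$, i.e. $Rn=Rm$ and $[n]=[m]$. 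This reduction drives both implications.

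For the forward implication I would assume $M$ is pseudo simple and fix $m\in M^{\#}$, so that $Rm$ is simple. If $[n]\in\overline{\{[m]\}}$, then $m\mid n$, giving $0\neq Rn\subseteq Rm$; simplicity of $Rm$ forces $Rn=Rm$, so $[n]=[m]$. Thus each singleton is closed and $D(M)$ is $T_{1}$.

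For the converse I would assume $D(M)$ is $T_{1}$ and fix $m\in M^{\#}$, aiming to prove $Rm$ is simple. Taking any nonzero submodule $N\subseteq Rm$ and any $0\neq n\in N$, the crucial bookkeeping step is to confirm $n\in M^{\#}$: since $Rn\subseteq Rm\subsetneq M$ (as $m\in M^{\#}$), we have $Rn\neq M$, so $n$ is a nonzero nongenerator. Because $n\in Rm$ we also have $m\mid n$, whence $[n]\in\overline{\{[m]\}}=\{[m]\}$ by Proposition \ref{closure} and the hypothesis, so $Rn=Rm$ and therefore $N=Rm$. As $N$ was arbitrary, $Rm$ is simple; and as $m$ was arbitrary, $M$ is pseudo simple.

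I expect the main obstacle to be the membership checks in $M^{\#}$ --- making sure the elements produced are genuine nonzero nongenerators so that the topological facts about $\operatorname{EC}(M^{\#})$ apply --- together with the passage from ``comparable cyclic submodules'' to outright simplicity, which relies on the elementary observation that every nonzero submodule of $Rm$ contains a nonzero cyclic submodule. The argument needs neither the domain hypothesis nor Lemma \ref{LemmaPseu}, although one could instead route the converse through the maximality of $\operatorname{ann}(m)$ via Lemma \ref{LemmaPseu}.
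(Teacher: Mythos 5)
Your proposal is correct and follows essentially the same route as the paper: both directions hinge on Proposition \ref{closure} together with the characterization of $T_{1}$ by closed singletons, and the forward implication is verbatim the paper's argument. The only (cosmetic) difference is in the converse, where the paper takes $a\notin\operatorname{ann}(m)$, deduces $Ram=Rm$ from $[am]\in\overline{\{[m]\}}$, and concludes that $\operatorname{ann}(m)$ is maximal (invoking Lemma \ref{LemmaPseu}), whereas you run the same computation on an arbitrary nonzero cyclic submodule $Rn\subseteq Rm$ and conclude simplicity of $Rm$ directly.
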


\begin{proof}
$\Leftarrow:$ Suppose $M$ is a pseudo simple module and choose $[m]\in
\operatorname{EC}(M^{\#})$. It follows that $Rm$ is a simple module. Now,
choose $[m_{1}]\in\overline{\{[m]\}}$ then $m_{1}\neq0$ and $m\mid m_{1}$ by
Proposition \ref{closure}. Hence, $Rm_{1}=Rm$ as $Rm$ is a simple module.
Therefore, $[m]=[m_{1}]$ so $D(M)$ is a $T_{1}$ space.

$\Rightarrow:$ Suppose $D(M)$ is a $T_{1}$-space then $\overline{\{[m]\}}%
=[m]$. Now, choose $a\not \in \operatorname{ann}(m)$ then note that $am\neq0$
and not a generator. Moreover, $[am]\in\overline{\{[m]\}}=\{[m]\}$ then
$Ram=Rm$. Hence, $1-xa\in\operatorname{ann}(m)$ for some $x\in R$ implying
that $\operatorname{ann}(m)$ is a maximal, which completes the proof.
\end{proof}

\section{Compactness, Connectivity and countability axioms of $D(M)$}

\bigskip In this section, we examine compactness, connectivities, and
countability axioms of $D(M).\ $A topological space $X$ is said to be a
\textit{compact space} if every open covering of $X$ has a finite subcover
\cite{Munkres}. In \cite[Proposition 11]{YiKo}, the authors proved that $D(R)$
is never a compact space. In the following, we show that it is possible for $D(M)$ to be a compact space depending on cases.

\begin{example}
\label{excom1}Let $M$ be a finite $R$-module or $EC(M^{\#})$ be a finite set.
Then $D(M)$ is trivially a compact space. In particular, the divisor topology of $\Z$-module $\Z_n$ where $n$ is a composite number, $D(\Z_n)$ is a compact space.
\end{example}

\begin{example}
\label{excom2}Consider $%
%TCIMACRO{\U{2124} }%
%BeginExpansion
\mathbb{Z}
%EndExpansion
$-module $%
%TCIMACRO{\U{211a} }%
%BeginExpansion
\mathbb{Q}
%EndExpansion
.\ $Then $EC(%
%TCIMACRO{\U{211a} }%
%BeginExpansion
\mathbb{Q}
%EndExpansion
^{\#})=\left\{  \left[  \dfrac{a}{b}\right]  :\gcd(a,b)=1\text{ and }0\neq
a,b\in%
%TCIMACRO{\U{2124} }%
%BeginExpansion
\mathbb{Z}
%EndExpansion
\right\}  $ is an infinite set. Take a finitely many elements $\left[
\dfrac{a_{1}}{b_{1}}\right]  ,\left[  \dfrac{a_{2}}{b_{2}}\right]
,\ldots,\left[  \dfrac{a_{n}}{b_{n}}\right]  \in EC(%
%TCIMACRO{\U{211a} }%
%BeginExpansion
\mathbb{Q}
%EndExpansion
^{\#})$. Now, choose a prime number $p$ greater than all $a_{i}$'s and $b_{i}%
$'s. Now, put $x=\dfrac{1}{p}\dfrac{a_{1}}{b_{1}}\dfrac{a_{2}}{b_{2}}%
\cdots\dfrac{a_{n}}{b_{n}}.\ $Then note that $x\nmid\dfrac{a_{i}}{b_{i}}$ for
all $i=1,2,\ldots,n.\ $Thus, we have $\left[  x\right]  \notin%
%TCIMACRO{\dbigcup \limits_{i=1}^{n}}%
%BeginExpansion
{\displaystyle\bigcup\limits_{i=1}^{n}}
%EndExpansion
U_{\frac{a_{i}}{b_{i}}}$ and so $EC(%
%TCIMACRO{\U{211a} }%
%BeginExpansion
\mathbb{Q}
%EndExpansion
^{\#})\neq%
%TCIMACRO{\dbigcup \limits_{i=1}^{n}}%
%BeginExpansion
{\displaystyle\bigcup\limits_{i=1}^{n}}
%EndExpansion
U_{\frac{a_{i}}{b_{i}}}.\ $This ensures that $D(%
%TCIMACRO{\U{211a} }%
%BeginExpansion
\mathbb{Q}
%EndExpansion
)$ is not a compact space.
\end{example}

\begin{example}
\label{excom3}Consider the $%
%TCIMACRO{\U{2124} }%
%BeginExpansion
\mathbb{Z}
%EndExpansion
$-module $%
%TCIMACRO{\U{211a} }%
%BeginExpansion
\mathbb{Q}
%EndExpansion
/%
%TCIMACRO{\U{2124} }%
%BeginExpansion
\mathbb{Z}
%EndExpansion
\ $and the submodule
\[
E(p)=\left\{  \alpha=\dfrac{a}{p^{n}}+%
%TCIMACRO{\U{2124} }%
%BeginExpansion
\mathbb{Z}
%EndExpansion
:a\in%
%TCIMACRO{\U{2124} }%
%BeginExpansion
\mathbb{Z}
%EndExpansion
\text{ and }n\in%
%TCIMACRO{\U{2115} }%
%BeginExpansion
\mathbb{N}
%EndExpansion
\cup\{0\}\right\}
\]
for a fixed prime number $p.\ $One can verify that $E(p)$ is not cyclic
and all nonzero element $\alpha$ of $E(p)\ $is associated to $\dfrac{1}{p^{n}%
}+%
%TCIMACRO{\U{2124} }%
%BeginExpansion
\mathbb{Z}
%EndExpansion
$ for some $n\in%
%TCIMACRO{\U{2115} }%
%BeginExpansion
\mathbb{N}
%EndExpansion
\cup\{0\}.\ $On the other hand, we have $\dfrac{1}{p^{n}}+%
%TCIMACRO{\U{2124} }%
%BeginExpansion
\mathbb{Z}
%EndExpansion
\ \mid\dfrac{1}{p^{k}}+%
%TCIMACRO{\U{2124} }%
%BeginExpansion
\mathbb{Z}
%EndExpansion
$\ if and only if $k\leq n$ if and only if $U_{\frac{1}{p^{n}}+%
%TCIMACRO{\U{2124} }%
%BeginExpansion
\mathbb{Z}
%EndExpansion
}\subseteq U_{\frac{1}{p^{k}}+%
%TCIMACRO{\U{2124} }%
%BeginExpansion
\mathbb{Z}
%EndExpansion
}.\ $Thus, open sets in $D(E(p))$ are of the form
\[
EC(E(p)^\#)=U_{\frac{1}{p}+%
%TCIMACRO{\U{2124} }%
%BeginExpansion
\mathbb{Z}
%EndExpansion
}\supseteq U_{\frac{1}{p^{2}}+%
%TCIMACRO{\U{2124} }%
%BeginExpansion
\mathbb{Z}
%EndExpansion
}\supseteq\cdots\supseteq U_{\frac{1}{p^{n}}+%
%TCIMACRO{\U{2124} }%
%BeginExpansion
\mathbb{Z}
%EndExpansion
}\supseteq U_{\frac{1}{p^{n+1}}+%
%TCIMACRO{\U{2124} }%
%BeginExpansion
\mathbb{Z}
%EndExpansion
}\supseteq\cdots
\]
It follows that $D(E(p))$ is a compact space.
\end{example}

\begin{theorem}
\label{tcom}Let $M\ $be an $R$-module. Then $D(M)$ is a compact space if and
only if the family $\left\{  Rm\right\}  _{m\in M^{\#}}$ has only finitely
many minimal elements with respect to inclusion $\subseteq$ if and only if $M$
has only finitely many simple submodules.
\end{theorem}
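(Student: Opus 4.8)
The plan is to exploit the Alexandrov structure of $D(M)$ to reduce compactness to a purely order‑theoretic statement about cyclic submodules. First I would prove the reduction that $D(M)$ is compact if and only if the \emph{basic} cover $\{U_m\}_{m\in M^{\#}}$ admits a finite subcover. The nontrivial direction goes as follows: suppose $U_{m_1},\dots,U_{m_k}$ already cover $\operatorname{EC}(M^{\#})$, and let an arbitrary open cover be given. For each $i$ pick a member $O_i$ of that cover with $[m_i]\in O_i$; since $U_{m_i}$ is the smallest open set containing $[m_i]$ by Lemma \ref{smallU_m}, we get $U_{m_i}\subseteq O_i$, whence $\bigcup_{i=1}^{k}O_i\supseteq\bigcup_{i=1}^{k}U_{m_i}=\operatorname{EC}(M^{\#})$. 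This uses only that $D(M)$ is Alexandrov (Proposition \ref{pAlex}). Using Proposition \ref{Prop1}(ii), $[m]\in U_{m_i}$ iff $m\mid m_i$ iff $Rm_i\subseteq Rm$, so the finite subcover condition translates to: there exist finitely many $m_1,\dots,m_k\in M^{\#}$ such that every cyclic submodule $Rm$ with $m\in M^{\#}$ contains some $Rm_i$.

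Next I would establish the clean algebraic dictionary that the minimal elements of $\{Rm\}_{m\in M^{\#}}$ under $\subseteq$ are exactly the simple submodules of $M$; this immediately yields the second equivalence. If $Rm_0$ is minimal, then for any $0\neq n\in Rm_0$ we have $Rn\subseteq Rm_0\subsetneq M$ (recall $M$ is not simple), so $n\in M^{\#}$ and minimality forces $Rn=Rm_0$; hence $Rm_0$ is simple. Conversely a simple submodule $S$ is properly contained in $M$, equals $Rs$ for every $0\neq s\in S$ (so $s\in M^{\#}$), and is minimal because any $Rn\subseteq S$ with $n\in M^{\#}$ is a nonzero submodule of the simple module $S$, forcing $Rn=S$. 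Thus ``finitely many minimal elements'' and ``finitely many simple submodules'' are literally the same condition, and I may use the two descriptions interchangeably.

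Finally I would close the loop between compactness and finiteness of the minimal elements. The forward implication is easy: given a finite covering family $Rm_1,\dots,Rm_k$ as above, any minimal element $Rn$ contains some $Rm_i$, and minimality forces $Rn=Rm_i$, so there are at most $k$ minimal elements. The reverse implication is where the real work lies: I would take the (finitely many) minimal elements $Rm_1,\dots,Rm_k$ and try to show that $\{U_{m_1},\dots,U_{m_k}\}$ covers $\operatorname{EC}(M^{\#})$, i.e.\ that \emph{every} nonzero cyclic submodule $Rm$ with $m\in M^{\#}$ contains one of the $Rm_i$ --- equivalently, contains a simple submodule. I expect this step to be the main obstacle: producing a minimal (hence simple) submodule beneath an arbitrary $Rm$ is not formal, and I would attack it by a descending‑chain or Zorn‑type argument applied to the family $\{Rn : n\in M^{\#},\ Rn\subseteq Rm\}$, extracting a minimal member that, by the dictionary of the second paragraph, must coincide with one of the $Rm_i$. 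This is precisely the point at which the finiteness of the simple submodules (essentially, an essential/finite socle phenomenon) has to be converted into the existence of minimal elements below every point, and it is the crux on which the equivalence turns.
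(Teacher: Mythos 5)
Your reduction of compactness to basic covers via the Alexandrov property, your dictionary identifying the minimal elements of $\{Rm\}_{m\in M^{\#}}$ with the simple submodules of $M$, and the forward implication (compact $\Rightarrow$ finitely many minimal elements) all agree with the paper's argument and are correct. The difficulty sits exactly where you flag it: in the reverse implication you must show that if there are only finitely many minimal elements $Rm_1,\dots,Rm_k$, then \emph{every} $Rm$ with $m\in M^{\#}$ contains one of them, i.e.\ that $\{U_{m_1},\dots,U_{m_k}\}$ covers $\operatorname{EC}(M^{\#})$. Your proposed Zorn/descending-chain attack on the family $\{Rn: n\in M^{\#},\ Rn\subseteq Rm\}$ does not go through: extracting a minimal member by Zorn's lemma requires every descending chain in this family to admit a nonzero lower bound belonging to the family, and chains of nonzero cyclic submodules can have zero intersection (already $2\mathbb{Z}\supsetneq 4\mathbb{Z}\supsetneq 8\mathbb{Z}\supsetneq\cdots$ inside $\mathbb{Z}$).

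In fact this step is not merely nonformal but false in general, so no argument can supply it for the statement as written. Take $M=\mathbb{Z}$ or $M=\mathbb{Q}$ over $R=\mathbb{Z}$: the family $\{Rm\}_{m\in M^{\#}}$ has \emph{no} minimal elements and $M$ has \emph{no} simple submodules, hence ``only finitely many'' of each, yet $D(M)$ is not compact --- the paper's own Example \ref{excom2} exhibits the non-compactness of $D(\mathbb{Q})$. The equivalence needs the additional hypothesis that every $Rm$ with $m\in M^{\#}$ lies above some minimal element (equivalently, contains a simple submodule); without it the class of counterexamples above is unavoidable. For comparison, the paper's proof of this direction conceals the identical gap behind the phrase ``one can similarly show that $EC(M^{\#})=\bigcup_{i=1}^{n}U_{m_{i}}$,'' so your diagnosis of where the crux lies is accurate, but the proposal as written does not (and cannot) close it.
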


\begin{proof}
Suppose that $D(M)$ is a compact space. It is clear that $EC(M^{\#})=%
%TCIMACRO{\dbigcup \limits_{m\in M^{\#}}}%
%BeginExpansion
{\displaystyle\bigcup\limits_{m\in M^{\#}}}
%EndExpansion
U_{m}$. It means that there exists $m_{1},m_{2},\ldots,m_{n}\in M^{\#}$ such that
$EC(M^{\#})=%
%TCIMACRO{\dbigcup \limits_{i=1}^{n}}%
%BeginExpansion
{\displaystyle\bigcup\limits_{i=1}^{n}}
%EndExpansion
U_{m_{i}}.\ $Now, choose an element $Rz\in\left\{  Rm\right\}  _{m\in M^{\#}%
}\ $then $z\in M^{\#}$ and $\left[  z\right]  \in EC(M^{\#})=%
%TCIMACRO{\dbigcup \limits_{i=1}^{n}}%
%BeginExpansion
{\displaystyle\bigcup\limits_{i=1}^{n}}
%EndExpansion
U_{m_{i}}.\ $Hence, there exists $1\leq i\leq n$ such that $\left[  z\right]  \in
U_{m_{i}}$ which implies that $Rm_{i}\subseteq Rz.\ $Thus minimal elements of
the family of $\left\{  Rm\right\}  _{m\in M^{\#}}$ is contained in
$\{Rm_{1},Rm_{2},\ldots,Rm_{n}\}$. For the converse, assume that the family
$\left\{  Rm\right\}  _{m\in M^{\#}}$ has only finitely many minimal elements
with respect to inclusion $\subseteq.$ Now, let $\{Rm_{1},Rm_{2},\ldots
,Rm_{n}\}$\ be the set of minimal elements of $\left\{  Rm\right\}  _{m\in
M^{\#}}.$\ Let $EC(M^{\#})=%
%TCIMACRO{\dbigcup \limits_{j\in\Delta}}%
%BeginExpansion
{\displaystyle\bigcup\limits_{j\in\Delta}}
%EndExpansion
O_{j}$ for some open sets $O_{j}$ in $D(M).\ $Since $\{U_{m}\}_{m\in M^{\#}}$
is a basis for $D(M),\ $we can write $EC(M^{\#})=%
%TCIMACRO{\dbigcup \limits_{j\in\Delta}}%
%BeginExpansion
{\displaystyle\bigcup\limits_{j\in\Delta}}
%EndExpansion
U_{x_{j}}$ for some $x_{j}\in M^{\#}.\ $Note that $\left[  m_{i}\right]  \in
EC(M^{\#})=%
%TCIMACRO{\dbigcup \limits_{j\in\Delta}}%
%BeginExpansion
{\displaystyle\bigcup\limits_{j\in\Delta}}
%EndExpansion
U_{x_{j}}$ for every $1\leq i\leq n$ and so there exists $x_{j_{i}}\in M^{\#}$
such that $m_{i}\ |\ x_{j_{i}}$ which implies that $Rx_{j_{i}}\subseteq
Rm_{i}.\ $By the minimality of $Rm_{i},\ $we have $Rx_{j_{i}}=Rm_{i}$, and so
we conclude that $U_{x_{j_{i}}}=U_{m_{i}}.\ $On the other hand, since
$\{Rm_{1},Rm_{2},\ldots,Rm_{n}\}$ is the set of minimal elements of the family
$\left\{  Rm\right\}  _{m\in M^{\#}},\ $one can similarly show that
$EC(M^{\#})=%
%TCIMACRO{\dbigcup \limits_{j\in\Delta}}%
%BeginExpansion
{\displaystyle\bigcup\limits_{j\in\Delta}}
%EndExpansion
U_{x_{j}}=%
%TCIMACRO{\dbigcup \limits_{i=1}^{n}}%
%BeginExpansion
{\displaystyle\bigcup\limits_{i=1}^{n}}
%EndExpansion
U_{m_{i}}=%
%TCIMACRO{\dbigcup \limits_{i=1}^{n}}%
%BeginExpansion
{\displaystyle\bigcup\limits_{i=1}^{n}}
%EndExpansion
U_{x_{j_{i}}}.\ $Hence, $D(M)$ is a compact space. The rest follows from the
fact that all simple submodules are cyclic and minimal elements of the family
$\left\{  Rm\right\}  _{m\in M^{\#}}.$
\end{proof}

Let $M$ be an $R$-module. A submodule $N$ of $M$ is said to be an
\textit{essential submodule} if $N\cap L=0$ implies $L=0.\ $The socle $Soc(M)$
of an $R$-module $M$ is defined as sum of simple submodules of $M.\ $If there
is no simple submodule, then $Soc(M)=0.\ $An $R$-module $M\ $is said to be a
\textit{finitely cogenerated module} if $%
%TCIMACRO{\dbigcap \limits_{i\in\Delta}}%
%BeginExpansion
{\displaystyle\bigcap\limits_{i\in\Delta}}
%EndExpansion
N_{i}=0$, then there exists a finite index set $\Delta^{\prime}\subseteq
\Delta$ such that $%
%TCIMACRO{\dbigcap \limits_{i\in\Delta^{\prime}}}%
%BeginExpansion
{\displaystyle\bigcap\limits_{i\in\Delta^{\prime}}}
%EndExpansion
N_{i}=0$ for every family $\left\{  N_{i}\right\}  _{i\in\Delta}$ of
submodules of $M\ $\cite{AnFu}. It is well known that $M\ $\ is a finitely
cogenerated if and only if $Soc(M)\ $is essential and finitely generated (See,
\cite[Proposition 10.7]{AnFu}).

\begin{theorem}
\label{tfinitelycog}Let $M$ be an $R$-module. If $D(M)\ $is a compact space,
then $Soc(M)\ $is finitely generated and essential. In this case, $M\ $is a
finitely cogenerated module.
\end{theorem}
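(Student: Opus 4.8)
The plan is to combine the combinatorial characterization of compactness from Theorem \ref{tcom} with the cogeneration criterion \cite[Proposition 10.7]{AnFu}, which asserts that $M$ is finitely cogenerated if and only if $Soc(M)$ is essential and finitely generated. Thus it suffices to establish these two properties of the socle under the standing hypothesis that $D(M)$ is compact. By Theorem \ref{tcom}, compactness of $D(M)$ gives that $M$ has only finitely many simple submodules, say $S_{1},\dots,S_{n}$. Since every simple module is cyclic, $Soc(M)=S_{1}+\cdots+S_{n}$ is generated by $n$ elements, which settles finite generation immediately.

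The substantive part is essentiality, and I would reduce it to the single assertion that every nonzero cyclic submodule $Rx$ of $M$ contains a simple submodule. Indeed, if this holds and $L$ is any nonzero submodule, picking $0\neq x\in L$ produces a simple submodule $S\subseteq Rx\subseteq L$, so $0\neq S\subseteq L\cap Soc(M)$ and $Soc(M)$ is essential. To manufacture such a simple submodule I would use the finite subcover directly: since $\operatorname{EC}(M^{\#})=\bigcup_{m\in M^{\#}}U_{m}$, compactness provides $m_{1},\dots,m_{k}\in M^{\#}$ with $\operatorname{EC}(M^{\#})=\bigcup_{i=1}^{k}U_{m_{i}}$. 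The defining feature of this cover is that for every $w\in M^{\#}$ there is an index $i$ with $[w]\in U_{m_{i}}$, that is $m_{i}\mid w$, equivalently $Rm_{i}\subseteq Rw$. Hence every nonzero cyclic submodule $Rw$ (with $w$ a nongenerator) lies above one of the finitely many fixed cyclic submodules $Rm_{1},\dots,Rm_{k}$.

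The key step is then to locate a \emph{simple} module among the $Rm_{i}$ that sits below a given $Rw$, and here I would argue inside the finite poset $\{Rm_{1},\dots,Rm_{k}\}$ ordered by inclusion. First I claim that any $Rm$ minimal in the whole family $\{Rm\}_{m\in M^{\#}}$ is automatically simple: a proper nonzero submodule of $Rm$ would contain some $Rx$ with $x\in M^{\#}$ and $Rx\subsetneq Rm$ (as $Rx\subseteq Rm\neq M$ forces $x\in M^{\#}$), contradicting minimality. Next, given $w\in M^{\#}$, fix $i$ with $Rm_{i}\subseteq Rw$ and choose $Rm_{i'}\subseteq Rm_{i}$ minimal among $\{Rm_{1},\dots,Rm_{k}\}$, which exists for cardinality reasons. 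The covering property upgrades this to genuine minimality in the full family: if $Rv\subsetneq Rm_{i'}$ for some $v\in M^{\#}$, the cover yields some $Rm_{j}\subseteq Rv\subsetneq Rm_{i'}$, contradicting minimality of $Rm_{i'}$ within the finite family. Therefore $Rm_{i'}$ is simple and $Rm_{i'}\subseteq Rw$, so $Rw$ contains a simple submodule. The remaining case $Rx=M$ is handled by noting that $M$ not simple forces $M^{\#}\neq\emptyset$, so applying the above to any $w\in M^{\#}$ gives a simple submodule of $M\supseteq Rw$. This establishes essentiality, and finite cogeneration of $M$ then follows from \cite[Proposition 10.7]{AnFu}.

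The main obstacle I anticipate is exactly the descending-chain difficulty in extracting a minimal (hence simple) submodule inside an arbitrary $Rx$, since there is no a priori chain condition on the cyclic submodules of $M$. Compactness is precisely what circumvents this: it replaces the unbounded family $\{Rm\}_{m\in M^{\#}}$ by the finite subcover $\{Rm_{i}\}$, where minimal elements exist trivially, and the covering property then forces those minimal elements to be minimal—and therefore simple—in the full family.
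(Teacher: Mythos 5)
Your argument is correct and reaches the conclusion along the same overall route as the paper: extract a finite subcover $\operatorname{EC}(M^{\#})=\bigcup_{i=1}^{k}U_{m_{i}}$, note that every nonzero proper cyclic submodule $Rw$ then contains one of the finitely many $Rm_{i}$, deduce that $Soc(M)$ is essential and finitely generated, and finish with \cite[Proposition 10.7]{AnFu}. Where you differ is precisely the step the paper leaves implicit, and your version is the more complete one. The paper asserts, by appeal to the proof of Theorem \ref{tcom}, that the finite subcover may be taken with each $Rm_{i}$ simple; unwinding this, it amounts to the claim that every $Rx$ with $x\in M^{\#}$ contains a simple submodule, which is not automatic because the family $\{Rm\}_{m\in M^{\#}}$ satisfies no descending chain condition. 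You actually prove it: picking $Rm_{i'}$ minimal in the finite poset $\{Rm_{1},\dots,Rm_{k}\}$ below a given $Rm_{i}$, the covering property promotes $Rm_{i'}$ to a minimal element of the whole family (any strictly smaller $Rv$ with $v\in M^{\#}$ would itself contain some $Rm_{j}$, contradicting minimality within the finite subfamily), and minimal members of $\{Rm\}_{m\in M^{\#}}$ are simple since any proper nonzero submodule would contain a strictly smaller $Rx$ with $x\in M^{\#}$. This lemma is exactly what is needed to justify the identity $\operatorname{EC}(M^{\#})=\bigcup_{i=1}^{n}U_{m_{i}}$ with the $Rm_{i}$ simple, on which both the paper's proof of this theorem and the converse direction of Theorem \ref{tcom} rest; your write-up supplies it, so nothing further is required. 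Your treatment of the degenerate case $Rx=M$ and of the finite generation of $Soc(M)$ agrees with the paper.
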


\begin{proof}
Suppose that $D(M)\ $is a compact space. Then by the proof of Theorem
\ref{tcom}, there exists finitely many simple submodules $Rm_{1},Rm_{2}%
,\ldots,Rm_{n}$ of $M.\ $This implies that $Soc(M)=%
%TCIMACRO{\dsum \limits_{i=1}^{n}}%
%BeginExpansion
{\displaystyle\sum\limits_{i=1}^{n}}
%EndExpansion
Rm_{i}$ and $Soc(M)\ $is finitely generated. On the other hand, by the
proof of Theorem \ref{tcom}, we have $EC(M^{\#})=%
%TCIMACRO{\dbigcup \limits_{i=1}^{n}}%
%BeginExpansion
{\displaystyle\bigcup\limits_{i=1}^{n}}
%EndExpansion
U_{m_{i}}.\ $Now, we will show that $Soc(M)\ $is essential. Let $L\ $be a
submodule of $M\ $such that $L\cap Soc(M)=0.\ $Assume that $L\neq0.\ $In this
case, choose $0\neq x\in L$. If $Rx=M,\ $then we have $Soc(M)=%
%TCIMACRO{\dsum \limits_{i=1}^{n}}%
%BeginExpansion
{\displaystyle\sum\limits_{i=1}^{n}}
%EndExpansion
Rm_{i}=0$ which implies that $m_{1},m_{2},\ldots,m_{n}$ are all zero, which is
a contradiction. Thus, we assume that $Rx\neq M,\ $that is, $x\in M^{\#}%
.\ $Since $\left[  x\right]  \in EC(M^{\#})=%
%TCIMACRO{\dbigcup \limits_{i=1}^{n}}%
%BeginExpansion
{\displaystyle\bigcup\limits_{i=1}^{n}}
%EndExpansion
U_{m_{i}},\ $there exists $1\leq j\leq n$ such that $x\ |\ m_{j}$ which
implies that $Rm_{j}\subseteq Rx.\ $As $Rx\cap Rm_{j}=Rm_{j}\subseteq L\cap
Soc(M)=0,\ $we have $m_{j}=0$, again a contradiction. Hence, we have
$L=0,\ $that is, $Soc(M)\ $is essential in $M.\ $The rest follows from
\cite[Proposition 10.7]{AnFu}.
\end{proof}

The converse of the previous theorem is not true in general. See the following example.

\begin{example}
Consider a finite dimensional vector space $V$ over an infinite field $K$ with
$1<\dim_{K}(V).$ For instance,\ let $V=%
%TCIMACRO{\U{211d} }%
%BeginExpansion
\mathbb{R}
%EndExpansion
^{3}$ and $K=%
%TCIMACRO{\U{211d} }%
%BeginExpansion
\mathbb{R}
%EndExpansion
.\ $Then $V$ is an Artinian module so it is a finitely cogenerated module. Since
all 1-dimensional subspaces are simple submodules, there are infinitely many
simple submodules. Then by Theorem \ref{tcom}, $D(V)$ is not a compact space.
\end{example}

A topological space$\ X$ is said to \textit{have a countable basis at the
point }$x\in X$ if there exists a countable family $\{U_{k}\}_{k\in%
%TCIMACRO{\U{2115} }%
%BeginExpansion
\mathbb{N}
%EndExpansion
}$ of neighborhoods of $x$ such that every neighborhood $U$ of $x$ contains at
least one of $U_{k}$ \cite{Munkres}. $X\ $is said to s\textit{atisfy the first
countability axiom} if there exists a countable basis at every point $x\in
X.\ $Also $X\ $is said to \textit{satisfy the second countability axiom} if it
has a countable basis \cite{Munkres}.

\begin{proposition}
\label{countable}(i) $D(M)$ satisfies the first countability axiom.

(ii)\ If $M\ $is an $R$-module with $\operatorname{EC}(M^{\#})$ has countable
element, then $D(M)$ satisfies the second countability axiom.
\end{proposition}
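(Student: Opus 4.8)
The plan is to treat both parts as nearly immediate consequences of the Alexandrov structure recorded in Proposition \ref{pAlex} together with Lemma \ref{smallU_m}; neither part requires any genuinely new construction.

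For part (i), I would fix a point $[m]\in\operatorname{EC}(M^{\#})$ and appeal directly to Lemma \ref{smallU_m}, which asserts that $U_{m}$ is the smallest open set containing $[m]$. Concretely, every open neighborhood $U$ of $[m]$ satisfies $U_{m}\subseteq U$. Consequently the single-element family $\{U_{m}\}$ already serves as a countable local basis at $[m]$: it is a family of neighborhoods of $[m]$, and every neighborhood of $[m]$ contains the member $U_{m}$. Since $[m]$ was arbitrary, $D(M)$ admits a countable (indeed finite) basis at each point, which is exactly the first countability axiom. This is the entire content of (i).

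For part (ii), the key preliminary observation is that the basic open sets depend only on the equivalence class, not on the chosen representative. Indeed, if $[m]=[n]$ then $Rm=Rn$, so $m\mid n$ and $n\mid m$, whence $U_{m}\subseteq U_{n}$ and $U_{n}\subseteq U_{m}$ by Proposition \ref{Prop1}(ii), giving $U_{m}=U_{n}$. Therefore the assignment $[m]\mapsto U_{m}$ is well defined, and the collection of \emph{distinct} members of the basis $\{U_{m}\}_{m\in M^{\#}}$ is indexed by $\operatorname{EC}(M^{\#})$ rather than by the possibly larger set $M^{\#}$. When $\operatorname{EC}(M^{\#})$ is countable, this shows the basis itself is a countable family of open sets forming a basis for $D(M)$, which is precisely the second countability axiom.

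The main point demanding care is the well-definedness step in (ii): one must confirm that passing from the index set $M^{\#}$ to the quotient $\operatorname{EC}(M^{\#})$ does not destroy the basis property and does not undercount. Since the original family $\{U_{m}\}_{m\in M^{\#}}$ is a basis and collapsing equal sets $U_{m}=U_{n}$ leaves the same collection of open sets, the reduced family remains a basis, and its cardinality is bounded by $|\operatorname{EC}(M^{\#})|$. Beyond this bookkeeping there is no real obstacle, as both statements reduce to the fact that $D(M)$ is Alexandrov with minimal neighborhoods $U_{m}$.
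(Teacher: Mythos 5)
Your proof is correct and follows essentially the same route as the paper: part (i) is exactly the paper's appeal to Lemma \ref{smallU_m} giving $U_{m}$ as a minimal (hence singleton local) basis at $[m]$, and part (ii) is the paper's observation that $\{U_{m}\}_{m\in M^{\#}}$ is a basis whose distinct members are indexed by $\operatorname{EC}(M^{\#})$. Your explicit check that $[m]=[n]$ implies $U_{m}=U_{n}$ is a detail the paper leaves implicit, but it is the right justification.
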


\begin{proof}
$(i):\ $It follows directly from Lemma \ref{smallU_m}.

$(ii):\ $It follows from the fact that $\left\{  U_{m}\right\}  _{m\in M^{\#}%
}$ is the basis of $D(M).$
\end{proof}

A topological space $X\ $is said to be an \textit{ultraconnected space} if the
intersection of two nonempty closed sets is nonempty\ \cite{Richmand}. $X\ $is
said to be a $T_{4}$\textit{-space} if for every two disjoint closed sets $F$
and $K$ of $X,$ there exists two open disjoint sets containing them. It is
clear that every ultraconnected space is trivially a $T_{4}$ space. Also,
$X\ $is called a \textit{normal space} if it is both $T_{1}\ $and $T_{4}$
space \cite{Munkres}. Let $M$ be an $R$-module.\ $M\ $is said to be a
\textit{multiplication module} if its each submodule $N$ has the form $IM$ for
some ideal $I$ of $R$ \cite{Barnard}. It is easy to see that $M$ is a
multiplication module if and only if $N=(N:M)M$ for every submodule $N$ of
$M,\ $where $(N:M)=\{r\in R:rM\subseteq N\}=ann(M/N).$ For more details on
multiplication modules, we refer \cite{AnArTeKo} and \cite{Smith} to the reader.

\begin{proposition}
\label{ultraconnected} Let $M$ be a multiplication $R$-module in which
$ann(M)$ is a prime ideal. Then $D(M)$ is an ultraconnected space. In
particular, $D(M)$ is a $T_{4}$-space.
\end{proposition}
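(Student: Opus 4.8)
The plan is to exploit the specialization order of $D(M)$ recorded in Proposition \ref{closure}, which says $\overline{\{[m]\}}=\{[n]\in\operatorname{EC}(M^{\#}):m\mid n\}$, and to reduce ultraconnectedness to a single algebraic statement: any two nonzero cyclic submodules of $M$ meet nontrivially. Since a closed set contains the closure of each of its points, if $F_{1},F_{2}$ are nonempty closed sets I would pick $[m_{1}]\in F_{1}$ and $[m_{2}]\in F_{2}$ (so $m_{1},m_{2}\in M^{\#}$) and look for a nonzero $z\in Rm_{1}\cap Rm_{2}$. Such a $z$ satisfies $Rz\subseteq Rm_{1}\subsetneq M$, hence $z\in M^{\#}$, while $m_{i}\mid z$ for $i=1,2$, so $[z]\in\overline{\{[m_{1}]\}}\cap\overline{\{[m_{2}]\}}\subseteq F_{1}\cap F_{2}$. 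Thus everything comes down to proving $Rm_{1}\cap Rm_{2}\neq 0$.

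The hard part is exactly this uniformity statement, and it is where both hypotheses enter. Writing $P=\operatorname{ann}(M)$, I would use that $M$ is a multiplication module to express the two cyclic submodules as $Rm_{1}=IM$ and $Rm_{2}=JM$ with $I=(Rm_{1}:M)$ and $J=(Rm_{2}:M)$. Because $Rm_{1}\neq 0$ and $Rm_{2}\neq 0$ we have $IM\neq 0$ and $JM\neq 0$, so neither $I$ nor $J$ is contained in $P=\operatorname{ann}(M)$. Here the primeness of $P$ is decisive: from $I\not\subseteq P$ and $J\not\subseteq P$ it follows that $IJ\not\subseteq P$, whence $(IJ)M\neq 0$. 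Finally $IJ\subseteq I$ and $IJ\subseteq J$ give $(IJ)M\subseteq IM\cap JM=Rm_{1}\cap Rm_{2}$, so this intersection is nonzero, as required.

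Combining the two steps shows that any two nonempty closed sets of $D(M)$ intersect, i.e. $D(M)$ is ultraconnected. The $T_{4}$ assertion is then immediate: in an ultraconnected space there are no two disjoint nonempty closed sets, so the separation condition holds vacuously (two disjoint closed sets force one of them to be empty, which is separated from anything by $\emptyset$ and the whole space). I expect the multiplication-module manipulation, namely correctly identifying $(IJ)M$ as a nonzero submodule of the intersection, to be the only delicate point; the topological reduction is routine once Proposition \ref{closure} is in hand.
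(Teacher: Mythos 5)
Your proof is correct and follows essentially the same route as the paper's: both use the multiplication hypothesis to write $Rm_i=(Rm_i\colon M)M$, use primeness of $\operatorname{ann}(M)$ to get a nonzero element of $Rm_1\cap Rm_2$, and then invoke Proposition \ref{closure} to place its class in both closed sets. Your phrasing of the key step via $IJ\not\subseteq\operatorname{ann}(M)$ is in fact a slightly cleaner rendering of the paper's choice of an element $x\in(Rk_1\colon M)\cap(Rk_2\colon M)\setminus\operatorname{ann}(M)$.
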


\begin{proof}
Let $K_{1}$ and $K_{2}$ be two nonempty closed sets of $D(M)$. Then there are
elements $[k_{1}]\in K_{1}$ and $[k_{2}]\in K_{2}$ for some $k_{1},k_{2}\in
M^{\#}$. It follows that $(Rk_{1}:M)\neq0$ and $(Rk_{2}:M)\neq0$ since $M$ is
a multiplication module with two nonzero submodules $Rk_{1}$ and $Rk_{2}$.
Then we have $(Rk_{1}:M)\cap(Rk_{2}:M)\supseteq(Rk_{1}:M)(Rk_{2}:M)\neq
0.\ $Since $ann(M)$ is a prime ideal, we can choose $x\in(Rk_{1}:M)\cap
(Rk_{2}:M)-ann(M).\ $Then there exists $m\in M$ such that $0\neq xm\in
Rk_{1}\cap Rk_{2}.\ $Then by Proposition \ref{closure}, $\left[  xm\right]
\in\overline{\{[k_{1}]\}}\cap\overline{\{[k_{2}]\}}\subseteq K_{1}\cap K_{2}$.
Hence, $D(M)$ is ultraconnected space, so is $T_{4}$-space.
\end{proof}

A topological space $X\ $is said to be a \textit{connected space} if it can
not be written as a union of two nonempty disjoint open subsets. Let $a,b\in
X.\ $Then a continuous map $f:[c,d]\rightarrow X$ such that $f(c)=a\ $and
$f(d)=b$ is said to be a \textit{path} from $a$ to $b.\ $If there exists a
path in $X$ for every pair of elements of $X,\ $then $X\ $is said to be a
\textit{path connected space} \cite{Munkres}.\ 

\begin{corollary}
\label{cconnected}If $M\ $is a multiplication $R$-module in which $ann(M)$ is
a prime ideal, then $D(M)$ is a connected and path connected space.
\end{corollary}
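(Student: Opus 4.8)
The plan is to deduce connectedness immediately from ultraconnectedness and then establish path connectedness by hand, using a \emph{specialization path} together with the intersection of closures guaranteed by Proposition \ref{ultraconnected}.

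For connectedness, I would argue that any ultraconnected space is connected. If $D(M)$ were disconnected, write $D(M)=A\cup B$ with $A,B$ nonempty, disjoint and open; then each is the complement of the other and hence also closed, so $A$ and $B$ would be two nonempty closed sets with $A\cap B=\emptyset$, contradicting Proposition \ref{ultraconnected}. Thus $D(M)$ is connected. (Connectedness also follows formally from the path connectedness established below, so this can be stated as a remark.)

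For path connectedness, the key tool is a specialization lemma: whenever $a\mid c$ for $a,c\in M^{\#}$, there is a path from $[a]$ to $[c]$. By Proposition \ref{closure} the condition $a\mid c$ is equivalent to $[c]\in\overline{\{[a]\}}$, so every open set containing $[c]$ also contains $[a]$. I would then define $f:[0,1]\to D(M)$ by $f(t)=[a]$ for $t\in[0,1)$ and $f(1)=[c]$, and verify continuity on basic open sets $O$: if $[c]\in O$ then $[a]\in O$ and $f^{-1}(O)=[0,1]$; if $[c]\notin O$ but $[a]\in O$ then $f^{-1}(O)=[0,1)$; and if $[a]\notin O$ then necessarily $[c]\notin O$, giving $f^{-1}(O)=\emptyset$. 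In all cases the preimage is open, so $f$ is a path from $[a]$ to $[c]$. Now, given arbitrary $[a],[b]\in EC(M^{\#})$, the closures $\overline{\{[a]\}}$ and $\overline{\{[b]\}}$ are nonempty closed sets, so ultraconnectedness yields a point $[c]\in\overline{\{[a]\}}\cap\overline{\{[b]\}}$; by Proposition \ref{closure} this means $a\mid c$ and $b\mid c$, with $c\in M^{\#}$ automatic since $[c]\in EC(M^{\#})$. Applying the specialization lemma twice gives a path from $[a]$ to $[c]$ and a path from $[b]$ to $[c]$; reversing the latter and concatenating via the pasting lemma produces a path from $[a]$ to $[b]$. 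Hence $D(M)$ is path connected, and in particular connected.

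The step requiring the most care is the continuity check for the specialization path, where the direction of the divisibility/closure correspondence from Proposition \ref{closure} must be applied correctly: it is precisely because $[c]$ lies in $\overline{\{[a]\}}$ that every open neighborhood of $[c]$ is forced to contain $[a]$, and this is exactly what keeps the preimage of an open set open. Once this lemma is in place, the ultraconnectedness already proved in Proposition \ref{ultraconnected} directly supplies the common point $[c]$ joining $[a]$ and $[b]$, and the remaining concatenation is routine.
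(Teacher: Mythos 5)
Your proof is correct and takes essentially the same route as the paper: both deduce everything from the ultraconnectedness established in Proposition \ref{ultraconnected}. The only difference is that the paper simply cites the standard fact that every ultraconnected space is path connected (hence connected), whereas you supply the proof of that fact via the specialization path; your continuity check and your use of Proposition \ref{closure} are both correct.
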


\begin{proof}
In view of Proposition \ref{ultraconnected}, we know that $D(M)$ is an
ultraconnected space. The rest follows from the fact that any ultraconnected
spaces are path connected space, so they are connected.
\end{proof}

\section{Further topological properties of $D(M)$}

\bigskip In this section, we examine further topological properties of $D(M)$
such as Hausdorff, $T_{3},T_{5},\ $discrete, and (completely) normal
properties. We know from Example \ref{ex1} and Example \ref{ex2} that $D(M)$
may be Hausdorff/discrete or not. Now, we provide two new examples as follows.

\begin{example}
Consider the $K$-module $K[[x]],$ where $K$ is any field. Then, $K[[X]]^{\#}%
=K[[X]]-\{0\}$ and it is easy to see that every element of $K[[X]]^{\#}$ is
irreducible on $K[[X]]^{\#}$ since $K[[X]]$ is a vector space. Thus,
$D(K[[X]]^{\#})$ is a discrete topology and hence a Hausdorff space.
\end{example}

\begin{example}
Consider ${\mathbb{Z}}$-module ${\mathbb{Z}}_{p^{\alpha}}$ where $p$ is a
prime number and $\alpha\geq3$. Then for any $\overline{x}\in{\mathbb{Z}%
}_{p^{\alpha}}^{\#}$, we can write $\overline{x}=k\overline{p}^{\beta}$ for
some $k\in{\mathbb{Z}}$ with $\gcd(k,p)=1$ and $1\leq\beta\leq\alpha-1$. Now,
we will characterize all irreducibles on ${\mathbb{Z}}_{p^{\alpha}}^{\#}$.
Since ${\mathbb{Z}}.\overline{x}={\mathbb{Z}}.\overline{p}^{\beta}$, we may
assume that $\overline{x}=\overline{p}^{\beta}$. If $\beta\geq2$, then clearly
we have $\overline{p}\mid\overline{p}^{\beta}$ and $\overline{p}^{\beta}%
\nmid\overline{p}$ which implies that $\overline{p}^{\beta}$ is not an
irreducible on ${\mathbb{Z}}_{p^{\alpha}}^{\#}$. Thus, irreducible on
${\mathbb{Z}}_{p^{\alpha}}^{\#}$ are precisely $\{k\overline{p}:\gcd(k,p)=1\}$
which implies that $U_{\overline{x}}=\{[\overline{x}]\}$ if and only if
$[\overline{x}]=[\overline{p}]$. That is, $U_{\overline{p}}=\{[\overline
{p}]\}$ and $U_{\overline{p}^{\beta}}\supsetneq\{[\overline{p}^{\beta}]\}$.
Consequently, $D({\mathbb{Z}}_{p^{\alpha}}^{\#})$ is not a discrete topology
and also not a Hausdorff space. In fact, $D({\mathbb{Z}}_{p^{\alpha}}^{\#})$
is not a $T_{1}$ space since ${\mathbb{Z}}$-module ${\mathbb{Z}}_{p^{\alpha}}%
$, where $\alpha\geq3,\ $is not a pseudo simple ${\mathbb{Z}}$ module by
Proposition \ref{pseudoZn}.
\end{example}

It is well known that the Zariski topology $Spec(R)\ $is a $T_{1}$ space if
and only if it is a Hausdorff space (See, \cite[Proposition 3.5]{Kim}). A
similar argument holds for the divisor topology $D(M).\ $In fact, we prove
that $D(M)$ is a $T_{1}$ space if and only if $D(M)$ is a Hausdorff space if
and only if it is discrete. To prove our main result (Theorem \ref{main}) in
this section, we need the following definition.

\begin{definition}
An $R$-module $M$ is said to satisfy $(\ast)$-condition if $m_{1}$ and $m_{2}$
are two nonzero nongenerator element of $M$ with $Rm_{1}\neq Rm_{2}$ and
whenever $Rm_{1}+Rm_{2}\subseteq Rx$ for some $x\in M$, then $Rx=M$.
\end{definition}

\begin{example}
(i)\ Every vector space trivially satisfies $(\ast)$-condition.

(ii) Every simple module satisfies $(\ast)$-condition.

(iii)$\ {\mathbb{Z}}$-module ${\mathbb{Z}}_{pq}$ is apart from examples $(i)$
and $(ii)$ satisfying $(\ast)$-condition, where $p$ and $q$ are two distinct primes.
\end{example}

Recall from \cite{Tuganbaev} that an $R$-module $M$ is called a \textit{Bezout
module} if each finitely generated submodule of $M$ is cyclic.

\begin{proposition}
Suppose $M$ is a Bezout module satisfying $(*)$-condition. Then, $Rm$ is a
maximal submodule for all nonzero nongenerator $m\in M$. In this case, $M$ is
either a simple module or every factor module $M/N$ is a simple module for all
nonzero submodule $N$ of $M$.
\end{proposition}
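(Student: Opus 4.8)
The plan is to prove the two assertions in turn, with the first (maximality of $Rm$) doing the real work and the second (the dichotomy) following formally from it.

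First I would fix a nonzero nongenerator $m\in M^{\#}$; since $Rm\neq M$, the submodule $Rm$ is proper, so it suffices to show that any submodule $N$ with $Rm\subsetneq N$ already equals $M$. Choose $n\in N\setminus Rm$; as $0\in Rm$ we have $n\neq 0$, and since $n\in Rn$ but $n\notin Rm$ we get $Rn\not\subseteq Rm$, hence $Rm\neq Rn$. Now I split into two cases. If $Rn=M$, then $N\supseteq Rn=M$ and we are done immediately. If instead $Rn\neq M$, then both $m$ and $n$ are nonzero nongenerators with $Rm\neq Rn$; because $M$ is a Bezout module, the finitely generated submodule $Rm+Rn$ is cyclic, say $Rm+Rn=Rx$ for some $x\in M$. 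Then $Rm+Rn\subseteq Rx$, so the $(\ast)$-condition forces $Rx=M$. Since $m,n\in N$ we have $M=Rx=Rm+Rn\subseteq N$, whence $N=M$. In either case $N=M$, so $Rm$ is a maximal submodule.

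For the second assertion I would argue as follows. If $M$ is simple we are in the first alternative, so assume $M$ is not simple and let $N$ be an arbitrary nonzero proper submodule of $M$. Pick $0\neq n\in N$; since $N\neq M$ we have $Rn\subseteq N\subsetneq M$, so $Rn\neq M$ and thus $n\in M^{\#}$. By the part already proved, $Rn$ is a maximal submodule, and from $Rn\subseteq N\subsetneq M$ together with maximality of $Rn$ we conclude $N=Rn$. Hence every nonzero proper submodule $N$ is maximal, i.e.\ $M/N$ is simple, which is exactly the second alternative.

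The computations here are all routine; the only genuine point is recognizing how the two hypotheses interlock. The Bezout property is precisely what lets me replace the (a priori two-generated) submodule $Rm+Rn$ by a cyclic one $Rx$, and only then is the $(\ast)$-condition, which speaks about containments $Rm_{1}+Rm_{2}\subseteq Rx$, applicable to conclude $Rx=M$. The one place demanding a little care is the bookkeeping of the generator/nongenerator dichotomy for the auxiliary element $n$ (and the reading of ``nonzero submodule'' as nonzero \emph{proper} submodule, since $M/M=0$ is not simple); both are handled by the case split above.
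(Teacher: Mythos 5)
Your proof is correct and follows essentially the same route as the paper: pick $n\in N\setminus Rm$, use the Bezout property to write $Rm+Rn=Rx$, invoke the $(\ast)$-condition to get $Rx=M$, and then reduce the dichotomy to the fact that every nonzero proper submodule contains a maximal cyclic one. Your explicit case split on whether $Rn=M$ is in fact slightly more careful than the paper's own argument, which applies the $(\ast)$-condition without first checking that $m'$ is a nongenerator (as the definition of the $(\ast)$-condition formally requires).
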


\begin{proof}
Let $0\neq m\in M$ be a nongenerator of $M$. We will show that $Rm$ is a
maximal submodule of $M$. Let $Rm\subsetneq N$ for some submodule $N$ of $M$
and choose $m^{\prime}\in N-Rm$. This gives $Rm\neq Rm^{\prime}$. Since $M$ is
a Bezout module, $Rm+Rm^{\prime}$ is a cyclic module so $Rm+Rm^{\prime}=Rx$
for some $x\in M$. Since $M$ satisfies $(\ast)$-condition, we can conclude
that $Rx=M\subseteq N$ which implies $N=M$. Therefore, $Rm$ is a maximal
submodule of $M$. Also since all proper nonzero cyclic submodules are maximal,
then all proper nonzero submodules are maximal. The rest is clear.
\end{proof}

\begin{theorem}
\label{Hausdorff*condition} Let $M$ be an $R$-module, then $D(M)$ is a
Hausdorff space if and only if $M$ satisfies $(\ast)$-condition.
\end{theorem}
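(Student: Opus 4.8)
The plan is to reduce Hausdorffness of $D(M)$ to a statement about the minimal open neighbourhoods $U_{m}$, and then translate that statement into the $(\ast)$-condition essentially verbatim. The engine is Proposition \ref{pAlex} together with Lemma \ref{smallU_m}: since $D(M)$ is an Alexandrov space in which $U_{m}$ is the smallest open set containing $[m]$, two distinct points $[m_{1}]$ and $[m_{2}]$ can be separated by disjoint open sets if and only if $U_{m_{1}}\cap U_{m_{2}}=\emptyset$. Indeed, if $O_{1}\ni[m_{1}]$ and $O_{2}\ni[m_{2}]$ are disjoint open sets, then $U_{m_{1}}\subseteq O_{1}$ and $U_{m_{2}}\subseteq O_{2}$ force $U_{m_{1}}\cap U_{m_{2}}=\emptyset$; conversely $U_{m_{1}}$ and $U_{m_{2}}$ themselves serve as the required disjoint neighbourhoods. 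Hence $D(M)$ is Hausdorff precisely when $U_{m_{1}}\cap U_{m_{2}}=\emptyset$ for every pair of distinct classes.

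Next I would compute when $U_{m_{1}}\cap U_{m_{2}}$ is nonempty. By definition $[x]\in U_{m_{1}}\cap U_{m_{2}}$ means $x\mid m_{1}$ and $x\mid m_{2}$ with $x\in M^{\#}$, equivalently $Rm_{1}\subseteq Rx$ and $Rm_{2}\subseteq Rx$ with $0\neq x$ and $Rx\neq M$; that is, $Rm_{1}+Rm_{2}\subseteq Rx$ for some $x\in M^{\#}$. Thus $U_{m_{1}}\cap U_{m_{2}}\neq\emptyset$ if and only if the sum $Rm_{1}+Rm_{2}$ is contained in a proper nonzero cyclic submodule $Rx$.

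Combining the two steps, $D(M)$ is Hausdorff if and only if for all nonzero nongenerators $m_{1},m_{2}$ with $Rm_{1}\neq Rm_{2}$ there is no $x\in M^{\#}$ with $Rm_{1}+Rm_{2}\subseteq Rx$. For the forward direction, assume $D(M)$ is Hausdorff and suppose $Rm_{1}+Rm_{2}\subseteq Rx$ for some $x\in M$; since $Rx\supseteq Rm_{1}\neq 0$ we have $x\neq 0$, so if $Rx\neq M$ then $x\in M^{\#}$ and $[x]\in U_{m_{1}}\cap U_{m_{2}}$, contradicting disjointness. Hence $Rx=M$, which is exactly the $(\ast)$-condition. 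For the reverse direction, assume $M$ satisfies $(\ast)$ and take distinct $[m_{1}],[m_{2}]$; any $[x]\in U_{m_{1}}\cap U_{m_{2}}$ would give $x\in M^{\#}$ with $Rm_{1}+Rm_{2}\subseteq Rx$ and $Rx\neq M$, contradicting $(\ast)$. Thus $U_{m_{1}}\cap U_{m_{2}}=\emptyset$ and $D(M)$ is Hausdorff.

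I expect the only delicate point to be the bookkeeping in the Alexandrov reduction, namely verifying that separation by arbitrary disjoint open sets is equivalent to disjointness of the minimal neighbourhoods, and the small but essential observation that $Rx\supseteq Rm_{1}\neq 0$ forces $x\neq 0$, so that membership in $M^{\#}$ hinges only on whether $Rx=M$. Everything else is a direct dictionary translation between the order-theoretic language of cyclic submodules and the topological language of the basis $\{U_{m}\}_{m\in M^{\#}}$.
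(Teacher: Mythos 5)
Your proposal is correct and follows essentially the same route as the paper: both directions hinge on the observation that $[x]\in U_{m_{1}}\cap U_{m_{2}}$ exactly when $x\in M^{\#}$ and $Rm_{1}+Rm_{2}\subseteq Rx$, combined with Lemma \ref{smallU_m} to pass from arbitrary separating open sets to the minimal neighbourhoods $U_{m_{1}},U_{m_{2}}$. Your explicit upfront reduction via the Alexandrov property and the remark that $Rx\supseteq Rm_{1}\neq 0$ forces $x\neq 0$ merely make tidy what the paper's proof does implicitly.
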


\begin{proof}
Suppose that $M$ satisfies $(\ast)$-condition. Let $[m_{1}]\neq\lbrack
m_{2}]\in\operatorname{EC}(M^{\#})$ then $Rm_{1}\neq Rm_{2}$ with two nonzero
nongenerator elements $m_{1}$ and $m_{2}$ of $M$. Now, we will show that
$U_{m_{1}}\cap U_{m_{2}}=\emptyset$. Suppose not, choose $[x]\in U_{m_{1}}\cap
U_{m_{2}}$ for some $x\in M^{\#}$. This gives $x\mid m_{1}$ and $x\mid m_{2}$
which implies that $Rm_{1}+Rm_{2}\subseteq Rx$. Since $M$ satisfies $(\ast
)$-condition, we have $Rx=M$ which is a contradiction. Thus, $U_{m_{1}}$ and
$U_{m_{2}}$ are disjoint two open sets containing $m_{1}$ and $m_{2}$,
respectively. This ensures that $D(M)$ is a Hausdorff space.

For the converse, assume $D(M)$ is a Hausdorff space. Let $m_{1}$ and $m_{2}$
be two nonzero nongenerator of $M$ and $Rm_{1}\neq Rm_{2}$. Let $Rm_{1}%
+Rm_{2}\subseteq Rx$ for some $x\in M$. Assume that $Rx\neq M$ and note that
$[m_{1}]\neq\lbrack m_{2}]\in\operatorname{EC}(M^{\#})$. Since $Rm_{1}%
+Rm_{2}\subseteq Rx$, we conclude that $x\mid m_{1}$ and $x\mid m_{2}$
implying that $[x]\in U_{m_{1}}\cap U_{m_{2}}$, that is, $U_{m_{1}}\cap
U_{m_{2}}\neq\emptyset$. Since $U_{m_{1}}$ and $U_{m_{2}}$ are the smallest
open set containing $m_{1}$ and $m_{2}$, respectively, $D(M)$ is not a
Hausdorff space which leads to a contradiction. Thus, $Rx=M$ and so $M$
satisfies $(\ast)$-property.
\end{proof}

\begin{theorem}
\label{tdiscrete}Let $M$ be an $R$-module, then $D(M)$ is a discrete topology
if and only if every element $m\in M^{\#}$ is an irreducible on $M^{\#}$.
\end{theorem}

\begin{proof}
Let $m\in M^{\#}$ and it is easy to see that $U_{m}=\{[m]\}$ since $m$ is
irreducible on $M^{\#}$. Thus, every singleton $\{[m]\}$ is open set, so
$D(M)$ is a discrete topology. Conversely, let $D(M)$ be a discrete topology.
Pick $m\in M^{\#}$ and $m_{1}\in M^{\#}$ such that $m_{1}\mid m$. Since $D(M)$
is a discrete space, $\{[m]\}$ is an open set. Note that $U_{m}$ is the
smallest open set containing $[m]$. Thus, $U_{m}=\{[m]\}$. Since $m_{1}\mid
m$, we have $[m_{1}]\in U_{m}=\{[m]\}$ which implies that $[m_{1}]=[m]$, that
is, $m\mid m_{1}$. Hence, $m$ is an irreducible on $M^{\#}$.
\end{proof}

\begin{theorem}
\label{main}Let $M$ be an $R$-module. Then the following statements are equivalent.

(i)\ $M$ is a pseudo simple module.

(ii)\ $M\ $satisfies $(\ast)$-condition.

(iii)\ $D(M)\ $is a discrete space.

(iv)\ $D(M)$ is a Hausdorff space.

(v)\ $D(M)$ is a $T_{1}$-space.
\end{theorem}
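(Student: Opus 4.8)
The plan is to prove the five statements equivalent by combining the two characterizations already in hand with a short cycle of elementary implications, so that the whole theorem reduces to a single genuinely new step. First I would record what is already available: Theorem \ref{T_1-p.simple} gives (i)\,$\Leftrightarrow$\,(v), and Theorem \ref{Hausdorff*condition} gives (ii)\,$\Leftrightarrow$\,(iv). Thus (ii) and (v) are each tied to a property already appearing in the list, and it suffices to connect (iii) and (iv) to the remaining items and to close a loop passing through (i).

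Two of the needed implications are purely topological and I would dispatch them immediately: every discrete space is Hausdorff, giving (iii)\,$\Rightarrow$\,(iv), and every Hausdorff space is $T_{1}$, giving (iv)\,$\Rightarrow$\,(v). Combined with (v)\,$\Rightarrow$\,(i) from Theorem \ref{T_1-p.simple}, the only implication left to close the cycle (i)\,$\Rightarrow$\,(iii)\,$\Rightarrow$\,(iv)\,$\Rightarrow$\,(v)\,$\Rightarrow$\,(i) is (i)\,$\Rightarrow$\,(iii).

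The substantive step, which I expect to carry all the weight, is (i)\,$\Rightarrow$\,(iii): pseudo simplicity forces $D(M)$ to be discrete. Here I would show that every $m\in M^{\#}$ is irreducible on $M^{\#}$ and then invoke Theorem \ref{tdiscrete}. Fix $m\in M^{\#}$ and suppose $m_{1}\mid m$ for some $m_{1}\in M^{\#}$; then $Rm\subseteq Rm_{1}$. Since $m_{1}\in M^{\#}$, pseudo simplicity makes $Rm_{1}$ a simple module, while $Rm$ is a nonzero submodule of it (nonzero because $m\neq 0$). Simplicity forces $Rm=Rm_{1}$, that is $m\mid m_{1}$, so $m$ is irreducible on $M^{\#}$; as $m$ was arbitrary, Theorem \ref{tdiscrete} yields that $D(M)$ is discrete. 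Equivalently, the same simplicity argument shows directly that $U_{m}=\{[m]\}$ for every $m\in M^{\#}$, which by Proposition \ref{isolated} makes every point isolated.

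Finally I would fold (ii) into the established cycle: since (ii)\,$\Leftrightarrow$\,(iv) by Theorem \ref{Hausdorff*condition} and (iv) already lies on the chain above, all five statements coincide. The only place where the module structure is used in an essential way is the simplicity argument for (i)\,$\Rightarrow$\,(iii); everything else is bookkeeping over results proved earlier. Consequently I do not anticipate any real obstacle, beyond checking the two hypotheses that make the key step work, namely that $Rm$ is genuinely nonzero (true since $m\neq 0$) and that $m_{1}\in M^{\#}$, so that $Rm_{1}$ is indeed simple.
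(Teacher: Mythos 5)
Your proposal is correct and follows essentially the same route as the paper: it cites Theorem \ref{T_1-p.simple} for (i)$\Leftrightarrow$(v) and Theorem \ref{Hausdorff*condition} for (ii)$\Leftrightarrow$(iv), notes that (iii)$\Rightarrow$(iv)$\Rightarrow$(v) is immediate, and closes the cycle with (i)$\Rightarrow$(iii) by showing every $m\in M^{\#}$ is irreducible on $M^{\#}$ (since $m_{1}\mid m$ gives $0\neq Rm\subseteq Rm_{1}$ with $Rm_{1}$ simple, forcing $Rm=Rm_{1}$) and invoking Theorem \ref{tdiscrete}. This is exactly the paper's argument, with the single hypothesis check ($Rm\neq 0$) made slightly more explicit.
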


\begin{proof}
$(i)\Leftrightarrow(v):$\ Follows from Theorem \ref{T_1-p.simple}.

$(ii)\Leftrightarrow(iv):\ $Follows from Theorem \ref{Hausdorff*condition}.

$(iii)\Rightarrow(iv)\Rightarrow(v):\ $Clear.

$(i)\Leftrightarrow(v)\Rightarrow(iii):\ $Suppose that $M$ is a pseudo simple
module and then by Theorem \ref{tdiscrete}, we only need to show that every
element $m\in M^{\#}$ is an irreducible on $M^{\#}$. Let $m_{1}\ |\ m$ for
some $m,m_{1}\in M^{\#}.\ $Since $M\ $is a pseudo simple module, $Rm_{1}$ is
simple, and thus we have $Rm=Rm_{1}$ which implies that $m\ |\ m_{1}.\ $Thus,
$m$ is an irreducible on $M^{\#}.\ $
\end{proof}

In a topological space $X,\ $two subsets $A$ and $B$ are said to be
\textit{seperated in} $X\ $if $A\cap\overline{B}=\emptyset$ and $\overline
{A}\cap B=\emptyset.\ $A topological space $X\ $is called a $T_{5}$\textit{
space} if for every two seperated sets $A$ and $B,\ $there exist two open
neighborhoods containing them \cite{Richmand}.\ $X\ $is said to be a
\textit{completely normal} if it is both a $T_{1}\ $and $T_{5}$ space
\cite{Richmand}.

\begin{proposition}
\label{T5}Let $M$ be an $R$-module. The following statements are satisfied.

(i)\ If $M\ $is a uniserial module, then $D(M)$ is a $T_{5}$-space.

(ii)$\ $If $M\ $is a torsion free multiplication nonuniserial module, then
$D(M)$ is not a $T_{5}$-space.
\end{proposition}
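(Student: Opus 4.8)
The plan is to reduce the $T_{5}$ question to a single combinatorial condition on the divisibility order of $\operatorname{EC}(M^{\#})$, and then read off both parts from it. Recall that $D(M)$ is an Alexandrov space (Proposition \ref{pAlex}) whose minimal open neighbourhood at $[m]$ is $U_{m}$ (Lemma \ref{smallU_m}), and that $\overline{\{[m]\}}=\{[n]:m\mid n\}$ (Proposition \ref{closure}). Hence for any subset $S$ of $\operatorname{EC}(M^{\#})$ the smallest open set containing $S$ is $\bigcup_{[s]\in S}U_{s}$, so two sets $A,B$ admit disjoint open neighbourhoods \emph{if and only if} $\big(\bigcup_{[a]\in A}U_{a}\big)\cap\big(\bigcup_{[b]\in B}U_{b}\big)=\emptyset$. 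Moreover, unwinding Proposition \ref{closure}, $A$ and $B$ are separated exactly when no element of one divides an element of the other, i.e.\ every $[a]\in A$ is incomparable to every $[b]\in B$. Combining these, I expect to establish the criterion: \emph{$D(M)$ is a $T_{5}$-space if and only if no two incomparable elements of $M^{\#}$ have a common divisor lying in $M^{\#}$}, equivalently $U_{a}\cap U_{b}=\emptyset$ whenever $[a],[b]$ are incomparable.

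Granting this criterion, part (i) is immediate: if $M$ is uniserial then Lemma \ref{divide} says any two elements of $M^{\#}$ are comparable, so incomparable pairs simply do not exist and the criterion holds vacuously. Concretely, every separated pair $A,B$ must then have one member empty, and one separates them by $\emptyset$ and the whole (open) space $\operatorname{EC}(M^{\#})$.

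For part (ii) the plan is to exhibit two incomparable elements of $M^{\#}$ sharing a common divisor, which by the criterion defeats $T_{5}$. Since $M$ is nonuniserial, Lemma \ref{divide} yields $m\in M^{\#}$ (indeed an incomparable pair). The key algebraic step is to produce $r_{1},r_{2}\in R$ with $r_{1}\nmid r_{2}$ and $r_{2}\nmid r_{1}$: if, on the contrary, every two elements of $R$ were comparable under divisibility, then $R$ would be a valuation domain, so all of its ideals would be totally ordered by inclusion; as $M$ is a multiplication module the assignment $N\mapsto(N:M)$ order-embeds the submodule lattice into the ideal lattice (because $N=(N:M)M$), forcing the submodules of $M$ to be totally ordered and hence $M$ to be uniserial, a contradiction. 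With such $r_{1},r_{2}$ in hand, set $a=r_{1}m$ and $b=r_{2}m$. Torsion-freeness gives $r_{1}m\mid r_{2}m\iff r_{1}\mid r_{2}$ (cancel $m$ after clearing), so $a,b$ are incomparable, while $a,b\in M^{\#}$ since $Ra,Rb\subseteq Rm\subsetneq M$ and $a,b\neq0$. As $m\mid a$ and $m\mid b$, the class $[m]$ lies in $U_{a}\cap U_{b}$; thus $\{[a]\}$ and $\{[b]\}$ are separated yet cannot be enclosed in disjoint open sets, so $D(M)$ is not $T_{5}$.

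The main obstacle is the passage, in part (ii), from ``nonuniserial'' to the existence of \emph{incomparable principal ideals} $(r_{1}),(r_{2})$: nonuniseriality only hands us incomparable submodules directly, and it is the multiplication hypothesis (through the order-embedding $N\mapsto(N:M)$) together with the valuation-domain characterisation that upgrades this to incomparable ring elements. The torsion-free hypothesis is exactly what validates the reduction $r_{1}m\mid r_{2}m\iff r_{1}\mid r_{2}$ (and $r_{1}m\neq0$), so both structural hypotheses on $M$ enter precisely here; the remaining work is the routine closure/Alexandrov bookkeeping behind the criterion.
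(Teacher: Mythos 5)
Your proof is correct. Part (i) follows the paper's argument (Lemma \ref{divide} plus Proposition \ref{closure} show no two singletons are separated); you additionally make explicit why this suffices, namely that any separated pair must then have an empty member, a point the paper leaves implicit. For part (ii) you reach the same kind of witness --- two incomparable classes whose minimal neighbourhoods meet --- by a genuinely different construction. The paper starts from the incomparable pair $m_{1},m_{2}\in M^{\#}$ given by Lemma \ref{divide}, picks any $m\in M^{\#}$ and any $0\neq x\in(Rm:M)$ (nonzero precisely because $M$ is a multiplication module), and uses torsion-freeness to see that $xm_{1},xm_{2}$ remain incomparable while both lie in $Rm$, so that $\{[xm_{1}]\}$ and $\{[xm_{2}]\}$ are separated yet $U_{m}\subseteq U_{xm_{1}}\cap U_{xm_{2}}$. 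You instead push the incomparability down to the ring: via the order-embedding $N\mapsto(N:M)$ of submodules into ideals and the fact that a domain whose elements are pairwise comparable under divisibility is a valuation domain with totally ordered ideals, you conclude that $R$ contains incomparable $r_{1},r_{2}$, and then $r_{1}m,r_{2}m$ serve as the witnesses with common divisor $m$. Both routes are valid and use all three hypotheses, but the paper's is the more economical one --- the multiplication hypothesis enters only through $(Rm:M)\neq0$ --- whereas yours needs the extra (correct) intermediate lemma that a multiplication module over a valuation domain is uniserial. Your preliminary reformulation of the $T_{5}$ property for $D(M)$ as ``incomparable classes have disjoint minimal neighbourhoods'' is a clean Alexandrov-space criterion that the paper uses implicitly but never states; it would be worth recording.
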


\begin{proof}
$(i):\ $Let $M$ be a uniserial module. Choose $m_{1},m_{2}\in M^{\#}.\ $Then
by Lemma \ref{divide}, $m_{1}\ |\ m_{2}$ or $m_{2}\ |\ m_{1}.$\ This ensures
that $\left\{  \left[  m_{1}\right]  \right\}  $ and $\left\{  \left[
m_{2}\right]  \right\}  $ are not seperated in $D(M)$ by Proposition
\ref{closure}. Thus, $D(M)$ is trivially a $T_{5}$-space.

$(ii):\ $Suppose that $M\ $is a torsion free multiplication nonuniserial
module. Then there exist $m_{1},m_{2}\in M^{\#}$ such that $m_{1}\nmid
m_{2}\ $and $m_{2}\nmid m_{1}.$ Choose $m\in M^{\#}$ then note that
$(Rm:M)\neq0$ and take $0\neq x\in(Rm:M).\ $This gives that $xm_{1}\nmid
xm_{2}\ $and $xm_{2}\nmid xm_{1}$ which implies that $\left\{  \left[
xm_{1}\right]  \right\}  $ and $\left\{  \left[  xm_{2}\right]  \right\}  $
are separated sets. Let $\left[  m^{\prime}\right]  \in U_{m}$ then we have
$m^{\prime}\ |\ m.\ $Since $x\in(Rm:M),\ $we get $xm_{1},xm_{2}\in Rm,$ that
is, $m\ |\ xm_{1}\ $and $m\ |\ xm_{2}.$\ Since $m^{\prime}\ |\ m,\ $we
conclude that $m^{\prime}\ |\ xm_{1}\ $and $m^{\prime}\ |\ xm_{2},\ $that is,
$\left[  m^{\prime}\right]  \in U_{xm_{1}}\cap U_{xm_{2}}$ and so
$U_{m}\subseteq U_{xm_{1}}\cap U_{xm_{2}}.\ $Hence, $D(M)$ is not a $T_{5}$-space.
\end{proof}

\begin{corollary}
\label{ccompletely}Let $M$ be a torsion free multiplication $R$-module. If $M$
is a nonuniserial module, then $D(M)$ is not a completely normal space.
\end{corollary}

\begin{proof}
It follows from Proposition \ref{T5} (ii).
\end{proof}

A topological space $X$ is said to be a $T_{3}$\textit{-space} if for every
closed set $F$ and $x\in X-F,\ $there exist two open disjoint sets $U\ $and
$V$ containing them \cite{Munkres}.

\begin{remark}
\label{rcompletely}We note here that since the discrete space satisfies each
of the separation axioms, $M$ is a pseudo simple $R$-module implies that
$D(M)$ is a $T_{i}$ space for $1\leq i\leq5$, and also $D(M)$ is a
(completely) normal space if and only if $M$ is a pseudo simple $R$-module.
\end{remark}


\begin{thebibliography}{99}                                                                                               %


\bibitem {Hamed}Ahmed, H. (2018). S-Noetherian spectrum condition.
Communications in Algebra, 46(8), 3314-3321.

\bibitem {AnArTeKo}Anderson, D. D., Arabaci, T., Tekir, \"{U}., \& Ko\c{c}, S.
(2020). On S-multiplication modules. Communications in Algebra, 48(8), 3398-3407.

\bibitem {AnFu}Anderson, F. W., \& Fuller, K. R. (1992). Rings and categories
of modules (Vol. 13). Springer Science \& Business Media.

\bibitem {Anderson}Anderson, D. D., \& Winders, M. (2009). Idealization of a
module. Journal of commutative algebra, 1(1), 3-56.

\bibitem {Arenas}Arenas, F. G. (1999). Alexandroff spaces. Acta Math. Univ.
Comenianae, 68(1), 17-25.

\bibitem {AtMac}Atiyah, M.F., Macdonald, I.G. (1969). Introduction to
Commutative Algebra. Addison-Wesley Publishing Co., Reading, Mass., London-Don
Mills, Ontario.

\bibitem {Barnard}Barnard, A. (1981). Multiplication modules. J. Algebra.,
71(1), 174-178.

\bibitem {Callialp}Callialp, F., Ulucak, G., \& Tekir, \"{U}. (2017). On the
Zariski topology over an $L$-module $M$. Turkish Journal of
Mathematics, 41(2), 326-336.

\bibitem {Ceken}\c{C}eken, S. (2023, December). Dual Zariski Spaces of
Modules. In Algebra Colloquium (Vol. 30, No. 04, pp. 569-584). World
Scientific Publishing Company.

\bibitem {Ceken2}\c{C}eken, S. (2022). On S-second spectrum of a module.
Revista de la Real Academia de Ciencias Exactas, F\'{\i}sicas y Naturales.
Serie A. Matem\'{a}ticas, 116(4), 171.

\bibitem {Smith}El-Bast, Z. A., \& Smith, P. F. (1988). Multiplication
modules. Communications in Algebra, 16(4), 755-779.

\bibitem {FacSal}Facchini, A., \& Salce, L. (1990). Uniserial modules: sums
and isomorphisms of subquotients. Communications in Algebra, 18(2), 499-517.

\bibitem {Hars}Hartshorne, R. (2013). Algebraic Geometry, vol. 52. Springer.

\bibitem {Lu}Lu, C. P. (1977). Factorial modules. The Rocky Mountain Journal
of Mathematics, 7(1), 125-139.

\bibitem {Kim}Kim, C. (2019). The Zariski topology on the prime spectrum of a
commutative ring (Doctoral dissertation, California State University, Northridge).

\bibitem {Munkres}Munkres, J. (2000). Topology, 2nd edn. Prentice Hall, Upper
Saddle River.

\bibitem {Moghaderi}Moghaderi, J., \& Nekooei, R. (2010). Valuation, discrete
valuation and Dedekind modules. International Electronic Journal of Algebra,
8(8), 18-29.

\bibitem {Ohm}Ohm, J. and Pendleton,\ R.\ (1968). Rings with Noetherian
spectrum, Duke Math. J. 35, 631--639.

\bibitem {OzNaTeKo}\"{O}zen, M., Naji, O. A., Tekir, \"{U}., \& Ko\c{c}, S.
(2023). On Modules Satisfying S-Noetherian Spectrum Condition. Communications
in Mathematics and Statistics, 11(3), 649-662.

\bibitem {Richmand}Richmond, T. (2020). General Topology: An Introduction. De Gruyter.

\bibitem {Sharp}Sharp, R. Y. (2000). Steps in commutative algebra (No. 51).
Cambridge university press.

\bibitem {Steen}Steen, L. A., Seebach, J. A., \& Steen, L. A. (1978).
Counterexamples in topology (Vol. 18). New York: Springer.

\bibitem {Tuganbaev}Tuganbaev, A. A. (2009). Bezout modules and rings. Journal
of Mathematical Sciences, 163(5), 596-597.

\bibitem {Yildiz}Y\i ld\i z, E., Ersoy, B. A., Tekir, \"{U}., \& Ko\c{c}, S.
(2021). On S-Zariski topology. Communications in Algebra, 49(3), 1212-1224.

\bibitem {YiKo}Yi\u{g}it, U., \& Ko\c{c}, S. (2025). On divisor topology of
commutative rings. Ricerche di Matematica, 1-13.
\end{thebibliography}
\end{document}